%
\documentclass [12pt]{amsart}
\usepackage{amsmath,amssymb}
 \usepackage{amsthm, amsfonts}
 \usepackage{enumerate}

\newtheorem{theorem}{Theorem}[section]

\newtheorem{lemma}[theorem]{Lemma}
\newtheorem{corollary}[theorem]{Corollary}
\theoremstyle{definition}
\newtheorem{definition}[theorem]{Definition}

\theoremstyle{remark}

\numberwithin{equation}{section}

\begin{document}

\title [{{On graded primary-like submodules}}]{On graded primary-like submodules of graded modules over graded commutative rings}

 \author[{{K. Al-Zoubi and M. Al-Dolat }}]{\textit{Khaldoun Al-Zoubi{*} and Mohammed Al-Dolat   }}

\address
{\textit{Khaldoun Al-Zoubi, Department of Mathematics and
Statistics, Jordan University of Science and Technology, P.O.Box
3030, Irbid 22110, Jordan.}}
\bigskip
{\email{\textit{kfzoubi@just.edu.jo}}}

\address
{\textit{Mohammed Al-Dolat, Department of Mathematics and
Statistics, Jordan University of Science and Technology, P.O.Box
3030, Irbid 22110, Jordan.}}
\bigskip
{\email{\textit{mmaldolat@just.edu.jo }}}

 \subjclass[2010]{13A02, 16W50.}

\date{}

\begin{abstract}
Let $G$ be a group with identity $e$. Let $R$ be a $G$-graded
commutative ring and $M$ a graded $R$-module. In this paper, we
introduce the concept of graded primary-like submodules as a new
generalization of graded primary ideals and give some basic results
about graded primary-like submodules of graded modules. Special
attention has been paid, when graded submodules satisfies the
gr-primeful property, to find extra properties of these graded
submodules.

\end{abstract}

\keywords{graded primary ideals, graded primary-like submodules,
graded prime submodules, gr-primeful property.
  \\$*$ Corresponding author
 }
 \maketitle


 \section{Introduction}
     Recently, H. F. Moghimi and F. Rashedi, in \cite{18} studied primary-like submodules as a new generalization of primary ideals to modules. Also, the concept of primeful module was introduced and studied by C.P. Lu in \cite{16}.

  The scope of this paper is devoted to the theory of graded modules
over graded commutative rings. One use of rings and modules
with gradings is in describing certain topics in algebraic geometry.
Here, in particular, we are dealing with graded primary-like
submodules.

The concept of graded primary ideal was introduced and studied by M.
Refai and K. Al-Zoubi in \cite{23}.

In the literature, there are several different generalization of the
notion of graded primary ideal to graded module. The concept of
graded primary submodule was introduced by S.E. Atani and F.
Farzalipour in \cite{12} and studied in \cite{1, 4, 14, 22}. Also
the the concept of graded prime submodule was introduced by S.E.
Atani in \cite{7} and studied in \cite{2, 3, 5, 6, 9, 10, 17, 22, 25}.

Here, we introduce the concept of graded primary-like submodule as a
new generalization of a graded primary ideal on the one hand and a
generalization of a graded prime submodule on other hand.

Our article is organized as follows.

In Section 2 we recall important notions which will be used
throughout the paper. In Section 3 we will investigate graded
submodules which satisfy the gr-primeful property. In Section 4 we
introduce the concept of graded primary-like submodules and give a
number of results concerning such modules. For example, we give a
characterization of graded primary-like submodules. We also study
the behavior of graded primary-like submodules under graded
homomorphisms and under localization.


 \section{Preliminaries}
\textbf{Convention}. Throughout this paper all rings are commutative
with identity and all modules are unitary.

First, we recall some basic properties of graded rings and modules
which will be used in the sequel. We refer to \cite{15}, \cite{19},
\cite{20} and \cite{21} for these basic properties and more
information on graded rings and modules.

Let $G$ be a group with identity $e$ and $R$ be a commutative ring
with identity $1_{R}$. Then $R$ is a \emph{$G$-graded ring} if there
exist additive subgroups $R_{g}$ of $R$ such that $R=\bigoplus_{g\in G}R_{g}$ and $%
R_{g}R_{h}\subseteq R_{gh}$ for all $g,h\in G$. The elements of
$R_{g}$ are called to be \emph{homogeneous} of degree $g$ where the
$R_{g}$'s are additive subgroups of $R$ indexed by the elements
$g\in G$. If $x\in R$, then $x$ can be written uniquely as
$\sum_{g\in G}x_{g}$, where $x_{g}$ is the component of $x$ in
$R_{g}$. Moreover, $h(R)=\bigcup_{g\in G}R_{g}$. Let $I$ be an ideal
of $R$. Then $I$ is called a \emph{graded ideal} of $(R,G)$
if $I=\bigoplus_{g\in G}(I\bigcap R_{g})$. Thus, if $x\in I$, then $%
x=\sum_{g\in G}x_{g}$ with $x_{g}\in I$. An ideal of a $G$-graded
ring need not be $G$-graded.

Let $R$ be a $G$-graded ring and $M$ an $R$-module. We say that $M$
is a \emph{$G$-graded $R$-module} (or \emph{graded $R$-module}) if
there exists a family of subgroups $\{M_{g}\}_{_{g\in G}}$ of $M$ such that $M=\underset{%
g\in G}{\bigoplus }M_{g}$ (as abelian groups) and
$R_{g}M_{h}\subseteq M_{gh} $ for all $g,h\in G$. Here, $R_{g}M_{h}$
denotes the additive subgroup of $M$ consisting of all finite sums of elements $r_{g}s_{h}$ with $%
r_{g}\in R_{g}$ and $s_{h}\in M_{h}.$ Also, we write $h(M)=\underset{g\in G}{%
\bigcup }M_{g}$ and the elements of $h(M)$ are called to be \emph{homogeneous%
}. Let $M=\underset{g\in G}{\bigoplus }M_{g}$ be a graded $R$-module
and $N$ a submodule of $M$. Then $N$ is called a \emph{graded submodule} of $M$ if $%
N=\underset{g\in G}{\bigoplus }N_{g}$ where $N_{g}=N\cap M_{g}$ for
$g\in G.$ In this case, $N_{g}$ is called the \emph{$g$-component}
of $N$.

Let $R$ be a $G$-graded ring and $S\subseteq h(R)$ be a
multiplicatively closed subset of $R.$ Then the ring of fraction
$S^{-1}R$ is a graded ring which is called the graded ring of fractions. Indeed, $%
S^{-1}R$ $=\underset{g\in G}{\oplus }(S^{-1}R)_{g}$ where $%
(S^{-1}R)_{g}=\{r/s:r\in R,s\in S$ $\ and$ $g=(\deg s)^{-1}(\deg r)\}.$ Let $%
M$ be a graded module over a $G$-graded ring $R$ and  $S\subseteq
h(R)$ be a multiplicatively closed subset of $R$. The module of
fraction  $S^{-1}M$ over a graded ring $S^{-1}R$ is a graded module
which is called module of fractions, if $\ S^{-1}M=\underset{g\in G}{\oplus }(S^{-1}M)_{g}$ where $%
(S^{-1}M)_{g}=\{m/s:m\in M,s\in S$ $\ and$ $g=(\deg s)^{-1}(\deg
m)\}.$ We write $h(S^{-1}R)=\underset{g\in G}{\cup }(S^{-1}R)_{g}$ and  $h(S^{-1}M)=%
\underset{g\in G}{\cup }(S^{-1}M)_{g}$. Consider the graded homomorphism $\eta :M\rightarrow S^{-1}M$ defined by $%
\eta (m)=m/1.$ For any graded submodule $N$ of $M,$ the submodule of
$S^{-1}M $ generated by $\eta (N)$ is denoted by $S^{-1}N.$ Similar
to non graded case, one can prove that $S^{-1}N=\left\{ \beta \in
S^{-1}M:\beta =m/s\text{ for }m\in N\text{ and }s\in S\right\} $ and
that $S^{-1}N\neq S^{-1}M$ if and only if $S\cap (N:_{R}M)=\phi .$
If $K$ is a graded submodule of $S^{-1}R $-module $S^{-1}M,$ then
$K\cap M$ will denote the graded submodule $\eta ^{-1}(K)$ of $M.$
Moreover, similar to the non graded case one can prove that
$S^{-1}(K\cap M)=K$.

Let $R$ be a $G$-graded ring and $M$ a graded $R$-module.\\ A proper
graded ideal $I$ of $R$ is said to be \textit{a graded maximal
ideal} of $R$ if $J$ is a graded ideal of $R$ such that $I\subseteq
J\subseteq R$, then $I=J$ or $J=R$ (see\cite{24}.)

A proper graded ideal $I$ of $R$ is said to be \textit{a graded
prime ideal }if whenever $rs\in I$, we have $r\in I$ or $s\in I,$
where $r,s\in h(R)$ (see \cite {24}.)
The \textit{graded radical} of $I$, denoted by Gr$(I)$, is the set of all $x=\sum\nolimits_{g\in G}x_{g} \in R$ such that for each $g\in G$ there exists $n_{g} \in \mathbb{Z}^{+}$ with $%
x^{n_{g}}\in I$. Note that, if $r$ is a homogeneous element, then
$r\in $Gr$(I) $ if and only if $r^{n}\in I$ for some $n\in \mathbb{N}$
(see \cite {24}.) It is shown in \cite[Proposition 2.5]{24} that
Gr$(I)$ is the intersection of all graded prime ideals of $R$
containing $I$.\\
A proper graded ideal $P$ of $R$ is said to be \emph{a graded primary ideal} if whenever $%
r,s\in h(R)$ with $rs\in P$, then either $r\in P$ or $s\in $Gr$(P)$
(see \cite{23}.)

 A proper graded submodule $N$ of $M$ is said to be \textit{a
graded prime submodule} if whenever $r\in h(R)$ and $m\in h(M)$ with
$rm\in N$, then either $r\in (N:_{R}M)=\{r\in R:rM\subseteq N\}$\emph{\ or }$m\in N$%
\emph{\ }(see \cite{7}.) A proper graded submodule $N$ of a graded
$R$-module $M$ is said to be \textit{a graded primary submodule } if
whenever $r\in h(R)$ and $m\in h(M)$ with $rm\in N$, then either
$m\in N$ or $r\in $Gr$((N:_{_{R}}M))$
(see \cite{12}.)\\

The \textit{graded radical} of a graded submodule $N$ of $M$,
denoted by Gr$_{M}(N)$, is defined to be the intersection of all
graded prime submodules of $M$ containing $N$. If $N$ is not
contained in any graded prime submodule of $M$, then Gr$_{M}(N)=M$
(see \cite{12}.)

A graded $R$-module $M$ over $G$-graded ring $R$ is said to be
\textit{a graded multiplication module (gr-multiplication module)}
if for every graded submodule $N$ of $M$ there exists a graded ideal
$I$ of $R$ such that $N=IM.$ It is clear that $M$ is
$gr$-multiplication $R$- module if and only if $N=(N:_{R}M)M$ for
every graded submodule $N$ of $M$ (see \cite{13}.)


 \section{Graded submodules which satisfy the gr-primeful property}


The following Lemma is known (see \cite[Lemma 1.2 and Lemma
2.7]{17}), we write it her for the sake of
 references.
\begin{lemma}
Let $R$ be a G-graded ring and $M$ a graded R-module. Then the
following hold:
\begin{enumerate}

\item If $N$ is a graded submodule of $M$, then $(N:_{_{R}}M)=\{r\in R:rM\subseteq N\}$ is a
graded ideal of R.

\item  If $N$ is a graded submodule of $M$, $r\in h(R),$ $x\in h(M)$ and $I$ is a
graded ideal of $R$, then $Rx,$ $IN$ and $rN$ are graded submodules
of $M.$

\item If  $N$ and $K$ are graded submodules of $M,$ then  $N+K$ and
$N\cap K$ are also graded submodules of $M$.

\item If $\{N_{i}\}_{i\in I}$ is a collection of graded submodules of $M$, then $N=%
\underset{i\in I}{\cap }$ $N_{i}$ is  a graded submodule of $M$.

\end{enumerate}\

\end{lemma}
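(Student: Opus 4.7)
The plan is to verify each of the four items directly from the definitions, exploiting in every case the uniqueness of the decomposition of an element of a graded object into its homogeneous components. I would proceed in the stated order.

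For part $(1)$, I take $r=\sum_{g\in G}r_{g}\in (N:_{R}M)$ with $r_{g}\in R_{g}$, and I aim to show each $r_{g}\in (N:_{R}M)$. Pick any homogeneous $m\in M_{h}$; then $rm=\sum_{g}r_{g}m\in N$, and since $r_{g}m\in M_{gh}$, these summands are precisely the homogeneous components of $rm$. Because $N=\bigoplus_{g}(N\cap M_{g})$ is graded, each $r_{g}m$ must lie in $N\cap M_{gh}$. As the homogeneous elements additively generate $M$, this gives $r_{g}M\subseteq N$, i.e.\ $r_{g}\in (N:_{R}M)$, so $(N:_{R}M)$ admits the decomposition required to be a graded ideal.

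For part $(2)$, I would exhibit the grading explicitly. If $x\in M_{h}$ then $Rx=\sum_{g}R_{g}x$ with $R_{g}x\subseteq M_{gh}$, and the sum is direct because the images land in distinct $M_{gh}$ for distinct $g$. For $IN$ with $I$ and $N$ both graded, every element of $IN$ is a finite sum of terms $r_{a}s_{b}$ with $r_{a}\in I\cap R_{a}$ and $s_{b}\in N\cap M_{b}$; each such term is homogeneous of degree $ab$, so grouping by total degree displays $IN$ as a direct sum, and checking $(IN)\cap M_{g}$ equals the span of such products of total degree $g$ is routine. The case $rN$ with $r\in h(R)$ follows by taking $I=Rr$.

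Parts $(3)$ and $(4)$ are two sides of the same coin. For sums, I would show $(N+K)\cap M_{g}=N_{g}+K_{g}$: one inclusion is immediate, and the other follows by decomposing each summand homogeneously using the gradings of $N$ and $K$. For the intersection of a family $\{N_{i}\}_{i\in I}$, I take $x\in \bigcap_{i}N_{i}$, write $x=\sum_{g}x_{g}$ as its homogeneous decomposition in $M$, and observe that since each $N_{i}$ is graded, each $x_{g}\in N_{i}$ for every $i$, hence $x_{g}\in \bigcap_{i}N_{i}$; this shows the intersection decomposes along the grading. The main obstacle --- if one can call it that --- is nothing more than keeping the bookkeeping of homogeneous components consistent, especially in $(1)$ where one must argue from the graded structure of $N$ back to the homogeneous pieces of $r$. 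None of the steps require machinery beyond the definitions recalled in Section~2.
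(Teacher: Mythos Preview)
Your proof is correct. Each of the four parts is handled by the natural argument: reduce to homogeneous components using the uniqueness of the graded decomposition, and check closure. The only place one might quibble is in part~(2), where the directness of $\sum_{g}R_{g}x$ relies on the injectivity of $g\mapsto gh$ in $G$; since $G$ is a group this is automatic, but it is worth saying explicitly.

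As for comparison with the paper: the paper does not supply its own proof of this lemma at all. It records the statement ``for the sake of references'' and cites \cite[Lemma~1.2 and Lemma~2.7]{17} (Lee and Varmazyar). Your argument is the standard direct verification from the definitions in Section~2, and is exactly what one would expect such a cited proof to contain. There is no substantive difference in approach to discuss --- you have simply filled in what the paper outsources.
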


\begin{definition}
Let $N$ be a graded submodule of a graded $R$-module $M$. We say
that $N$ satisfies the gr-primeful property if for each graded
prime ideal $p$ of $R$ with $(N:_{R}M)\subseteq p,$ there exists a
graded prime submodule $P$ of $M$ containing $N$ such that
$(P:_{R}M)=p.$ A graded $R$-module $M$ is called gr-primeful, if
either $M=0$ or the zero graded submodule of $M$ satisfies the
gr-primeful property.
\end{definition}

\begin{theorem}
Let $R$ be a $G$-graded ring and $M$ a graded $R$-module. If a
graded submodule $N$ of $M$ satisfies the gr-primeful property,
then Gr$_{M}(N)$ satisfies the gr-primeful property.
\end{theorem}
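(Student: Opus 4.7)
The plan is to verify the defining property of the gr-primeful property directly for $\mathrm{Gr}_{M}(N)$, using the corresponding property of $N$ together with the description of $\mathrm{Gr}_{M}(N)$ as the intersection of all graded prime submodules of $M$ containing $N$. No recourse to localization or to the structure of graded ideals of $R$ seems necessary; the argument should be essentially a short chain of inclusions.

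First, I would fix an arbitrary graded prime ideal $p$ of $R$ with $(\mathrm{Gr}_{M}(N):_{R}M)\subseteq p$. Since $N\subseteq \mathrm{Gr}_{M}(N)$, the colon ideals satisfy $(N:_{R}M)\subseteq(\mathrm{Gr}_{M}(N):_{R}M)$, and hence $(N:_{R}M)\subseteq p$ as well. Invoking the hypothesis that $N$ satisfies the gr-primeful property at the prime $p$, I obtain a graded prime submodule $P$ of $M$ with $N\subseteq P$ and $(P:_{R}M)=p$.

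The final step is to note that, by the definition of the graded radical recalled in Section~2, $\mathrm{Gr}_{M}(N)$ is the intersection of all graded prime submodules of $M$ containing $N$; since $P$ is one such submodule, the inclusion $\mathrm{Gr}_{M}(N)\subseteq P$ is automatic. Thus $P$ is a graded prime submodule of $M$ containing $\mathrm{Gr}_{M}(N)$ with $(P:_{R}M)=p$, which is exactly what is needed. (In the degenerate case $\mathrm{Gr}_{M}(N)=M$, no prime $p$ satisfies the hypothesis, so the condition is vacuous.)

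There is no real obstacle here: the proof hinges only on the monotonicity $N\subseteq\mathrm{Gr}_{M}(N)$ of the radical operation, the consequent monotonicity of colon ideals, and the intersection description of $\mathrm{Gr}_{M}(N)$. The single point that deserves explicit mention is that the graded prime submodule produced by the gr-primeful property of $N$ automatically contains $\mathrm{Gr}_{M}(N)$, and this is immediate from the definition of the graded radical.
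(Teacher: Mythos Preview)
Your proof is correct and follows essentially the same route as the paper's: both use the chain $(N:_{R}M)\subseteq(\mathrm{Gr}_{M}(N):_{R}M)\subseteq p$ to invoke the gr-primeful property of $N$, then observe that the resulting graded prime submodule $P\supseteq N$ automatically contains $\mathrm{Gr}_{M}(N)$. You merely spell out a bit more explicitly why $\mathrm{Gr}_{M}(N)\subseteq P$ (via the intersection description) and handle the degenerate case $\mathrm{Gr}_{M}(N)=M$, which the paper leaves implicit.
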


\begin{proof}
Suppose that $p$ is a graded prime ideal of $R$ containing $%
($Gr$_{M}(N):_{R}M).$ Since $N$ satisfies the gr-primeful property and $%
(N:_{R}M)\subseteq ($Gr$_{M}(N):_{R}M)\subseteq p,$ there exists a
graded prime submodule $P$ of $M$ containing $N$ such that
$(P:_{R}M)=p$. It is clear that Gr$_{M}(N)\subseteq P.$ Thus
Gr$_{M}(N)$ satisfies the gr-primeful property.
\end{proof}

\begin{lemma}
Let $R$ be a $G$-graded ring, $M$ a graded $R$-module and $N$ a
graded submodule of $M$. Then Gr$((N:_{R}M))\subseteq
($Gr$_{M}(N):_{R}M).$
\end{lemma}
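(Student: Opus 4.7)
The plan is to unpack both sides in terms of their prime-theoretic definitions. Recall that $\mathrm{Gr}((N:_R M))$ is (by Proposition 2.5 of \cite{24}) the intersection of all graded prime ideals of $R$ containing $(N:_R M)$, while $\mathrm{Gr}_M(N)$ is the intersection of all graded prime submodules of $M$ containing $N$. So it suffices to show that for every graded prime submodule $P$ of $M$ with $N \subseteq P$ we have
\[
\mathrm{Gr}((N:_R M)) \;\subseteq\; (P:_R M),
\]
because then multiplying by $M$ gives $\mathrm{Gr}((N:_R M))\,M \subseteq P$ for every such $P$, hence $\mathrm{Gr}((N:_R M))\,M \subseteq \mathrm{Gr}_M(N)$, which is exactly the desired containment. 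The boundary case where $N$ is contained in no graded prime submodule is handled separately: then $\mathrm{Gr}_M(N)=M$ and the inclusion is trivial.

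The key input to reduce the module-side statement to the ring-side statement is the standard fact that if $P$ is a graded prime submodule of $M$, then $(P:_R M)$ is a graded prime ideal of $R$. I would either cite this from the literature on graded prime submodules or prove it in one line: given $ab \in (P:_R M)$ with $a,b \in h(R)$ and $b \notin (P:_R M)$, pick a homogeneous $m \in h(M)$ with $bm \notin P$; then $a(bm) \in P$ with $bm \in h(M)$ forces $a \in (P:_R M)$ by the graded prime property of $P$.

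Given this, the main step is to chase through the definition of the graded radical of an ideal. Take $x = \sum_{g\in G} x_g \in \mathrm{Gr}((N:_R M))$; by definition, for each $g$ there is $n_g \in \mathbb{Z}^+$ with $x_g^{n_g} \in (N:_R M) \subseteq (P:_R M)$. Since $(P:_R M)$ is a graded prime ideal and $x_g$ is homogeneous, this yields $x_g \in (P:_R M)$ for every $g$, and summing gives $x \in (P:_R M)$.

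I expect no significant obstacle here; the proof is essentially a bookkeeping exercise that matches the non-graded analogue. The only mildly delicate point is remembering that the graded radical of an ideal is defined componentwise on the homogeneous decomposition, so the verification has to be done on each $x_g$ and then reassembled via the fact that $(P:_R M)$ is a graded (hence sum-closed) ideal.
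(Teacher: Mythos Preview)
Your proposal is correct and follows essentially the same route as the paper's proof: for each graded prime submodule $P\supseteq N$, use that $(P:_R M)$ is a graded prime ideal containing $(N:_R M)$ to deduce $\mathrm{Gr}((N:_R M))\subseteq (P:_R M)$, then multiply by $M$ and intersect over all such $P$. The only cosmetic differences are that the paper cites \cite[Proposition~2.7(ii)]{7} and \cite[Proposition~1.2]{23} for the two ingredients you unpack by hand, and that you explicitly note the boundary case $\mathrm{Gr}_M(N)=M$, which the paper leaves implicit.
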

\begin{proof}
Let $P$ be a graded prime submodule of $M$ containing $N$. By
\cite[Proposition 2.7(ii)]{7}, $(P:_{R}M)$ is a graded prime ideal of $%
R.$ Since $N\subseteq P,$ we get $(N:_{R}M)\subseteq (P:_{R}M).$ By
\cite[Proposition 1.2]{23}, we conclude that
Gr$((N:_{R}M))\subseteq (P:_{R}M).$ Thus Gr$((N:_{R}M))M\subseteq
(P:_{R}M)M\subseteq P$ this implies that Gr$((N:_{R}M))M\subseteq
$Gr$_{M}(N).$ Therefore Gr$((N:_{R}M))\subseteq ($Gr$_{M}(N):_{R}M).$
\end{proof}

\begin{theorem}
Let $R$ be a $G$-graded ring and $M$ a graded $R$-module. If $N$ is
a graded submodule of $M$ satisfying the gr-primeful property, then
$($Gr$_{M}(N):_{R}M)=Gr((N:_{R}M)).$
\end{theorem}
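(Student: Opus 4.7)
The plan is to establish the equality by proving the two inclusions separately; one direction is essentially free from Lemma 3.4, so the real content is the reverse inclusion, which is exactly where the gr-primeful hypothesis gets used.

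First I would invoke Lemma 3.4 directly to get $\mathrm{Gr}((N:_{R}M)) \subseteq (\mathrm{Gr}_{M}(N):_{R}M)$. This inclusion is unconditional and requires no extra work.

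For the reverse inclusion $(\mathrm{Gr}_{M}(N):_{R}M) \subseteq \mathrm{Gr}((N:_{R}M))$, my strategy is to use the fact, recalled in the Preliminaries from \cite[Proposition 2.5]{24}, that $\mathrm{Gr}((N:_{R}M))$ equals the intersection of all graded prime ideals of $R$ containing $(N:_{R}M)$. So it suffices to pick an arbitrary graded prime ideal $p$ of $R$ with $(N:_{R}M)\subseteq p$ and show that $(\mathrm{Gr}_{M}(N):_{R}M)\subseteq p$. Here the gr-primeful hypothesis on $N$ gives precisely what is needed: there exists a graded prime submodule $P$ of $M$ with $N\subseteq P$ and $(P:_{R}M)=p$. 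Since $\mathrm{Gr}_{M}(N)$ is the intersection of all graded prime submodules of $M$ containing $N$, we have $\mathrm{Gr}_{M}(N)\subseteq P$, and therefore $(\mathrm{Gr}_{M}(N):_{R}M)\subseteq (P:_{R}M)=p$, as required. Taking the intersection over all such $p$ completes this direction.

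The argument is short and the main (and only) obstacle is recognizing that the gr-primeful property is exactly designed to promote graded prime ideals lying above $(N:_{R}M)$ to graded prime submodules lying above $N$ with the prescribed contraction; without this lifting, the reverse inclusion would fail in general. Once the correspondence $p \mapsto P$ with $(P:_{R}M)=p$ is in hand, combining the two inclusions yields $(\mathrm{Gr}_{M}(N):_{R}M)=\mathrm{Gr}((N:_{R}M))$.
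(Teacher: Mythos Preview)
Your proof is correct and follows essentially the same route as the paper: one inclusion comes from Lemma~3.4, and for the other you pick an arbitrary graded prime $p\supseteq(N:_{R}M)$, use the gr-primeful property to lift it to a graded prime submodule $P$ with $(P:_{R}M)=p$, and conclude $(\mathrm{Gr}_{M}(N):_{R}M)\subseteq p$. Your write-up is in fact slightly more explicit than the paper's, since you spell out why $\mathrm{Gr}_{M}(N)\subseteq P$ and why containment in every such $p$ yields containment in $\mathrm{Gr}((N:_{R}M))$.
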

\begin{proof}
Let $p$ be a graded prime ideal of $R$ containing $(N:_{R}M).$ Since
$N$ satisfies the gr-primeful property, there exists a graded prime submodule $%
P$ of $M$ containing $N$ such that \ $(P:_{R}M)=p.$ This implies that
$($Gr$_{M}(N):_{R}M)\subseteq (P:_{R}M)\subseteq p$ and hence
$($Gr$_{M}(N):_{R}M)\subseteq $Gr$((N:_{R}M)).$ By Lemma 3.4,
Gr$((N:_{R}M))\subseteq ($Gr$_{M}(N):_{R}M).$ Therefore $%
($Gr$_{M}(N):_{R}M)=$Gr$((N:_{R}M)).$
\end{proof}

Let $R$ be a $G$-graded ring and $M$, $M^{\prime }$ graded $R$-modules. Let $%
\varphi :M\rightarrow M^{\prime }$ be an $R$-module homomorphism. Then $%
\varphi $ is said to be a graded homomorphism if $\varphi
(M_{g})\subseteq M_{g}^{\prime }$ for all $g\in G$ (see \cite{19}.)

\begin{theorem}
Let $R$ be a $G$-graded ring and $M,$ $M^{\prime }$ be two graded
$R$-modules and $N^{\prime }$ a graded submodule of $M^{\prime }.$
Let $\varphi :M\rightarrow M^{\prime }$ be a graded epimorphism. If
$N^{\prime }$ satisfies the gr-primeful property, then so does
$\varphi ^{-1}($ $N^{\prime }).$
\end{theorem}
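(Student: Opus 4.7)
The plan is to pull back a graded prime submodule witness from $M'$ to $M$ via $\varphi$. Given a graded prime ideal $p$ of $R$ with $(\varphi^{-1}(N'):_{R}M)\subseteq p$, I first want to show that this hypothesis is equivalent to $(N':_{R}M')\subseteq p$. The point is that for any $r\in R$, the surjectivity of $\varphi$ gives $rM\subseteq \varphi^{-1}(N')$ if and only if $\varphi(rM)=r\varphi(M)=rM'\subseteq N'$, so $(\varphi^{-1}(N'):_{R}M)=(N':_{R}M')$. Thus $p$ is a graded prime ideal of $R$ containing $(N':_{R}M')$.

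Next, I invoke the gr-primeful property of $N'$ to obtain a graded prime submodule $P'$ of $M'$ with $N'\subseteq P'$ and $(P':_{R}M')=p$. The natural candidate on the $M$-side is $P:=\varphi^{-1}(P')$. Since $\varphi$ is a graded homomorphism, $P$ is a graded submodule of $M$; and since $N'\subseteq P'$, we have $\varphi^{-1}(N')\subseteq P$. Properness of $P$ follows from properness of $P'$: if $P=M$ then by surjectivity $M'=\varphi(M)\subseteq P'$, a contradiction.

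It remains to check that $P$ is a graded prime submodule with $(P:_{R}M)=p$. For the colon, repeat the surjectivity argument from the first step: $rM\subseteq P=\varphi^{-1}(P')$ iff $rM'\subseteq P'$, so $(P:_{R}M)=(P':_{R}M')=p$. For primeness, take $r\in h(R)$ and $m\in h(M)$ with $rm\in P$. Then $r\varphi(m)=\varphi(rm)\in P'$, and since $\varphi$ is graded, $\varphi(m)\in h(M')$. As $P'$ is graded prime, either $r\in (P':_{R}M')=p=(P:_{R}M)$ or $\varphi(m)\in P'$, i.e.\ $m\in \varphi^{-1}(P')=P$, as required.

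There is no serious obstacle here; the proof is a routine pullback argument. The only place care is needed is in the two appeals to surjectivity of $\varphi$ (identifying $(\varphi^{-1}(N'):_{R}M)$ with $(N':_{R}M')$, and similarly for $P$), together with the observation that $\varphi$ sends homogeneous elements to homogeneous elements, so that graded primeness of $P'$ transfers to graded primeness of $P=\varphi^{-1}(P')$.
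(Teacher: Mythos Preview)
Your proof is correct and follows essentially the same route as the paper's: both identify $(\varphi^{-1}(N'):_{R}M)$ with $(N':_{R}M')$ via surjectivity, invoke the gr-primeful property of $N'$ to get $P'$, and pull back $P:=\varphi^{-1}(P')$. The only cosmetic difference is that the paper cites an external lemma for the fact that $\varphi^{-1}(P')$ is graded prime, whereas you verify it directly.
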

\begin{proof}
Let $p$ be a graded prime ideal of $R$ such that $(\varphi ^{-1}($
$N^{\prime }):_{R}M)\subseteq p.$ We show that $(N^{\prime
}:_{R}M^{\prime })\subseteq p.$ Let $r\in (N^{\prime }:_{R}M^{\prime
})\cap
h(R).$ Using the fact that $\varphi $ is a graded epimorphism, we have $%
rM^{\prime }=r\varphi (M)=\varphi (rM)\subseteq N^{\prime }$ and so $%
rM\subseteq \varphi ^{-1}(N^{\prime }).$ Hence $r\in (\varphi ^{-1}($ $%
N^{\prime }):_{R}M)\subseteq p.$ Thus $(N^{\prime }:_{R}M^{\prime
})\subseteq p.$ Since $N^{\prime }$ satisfies the gr-primeful
property, there exists a graded prime submodule $P^{\prime }$ of
$M^{\prime }$ containing $N^{\prime } $ such that $(P^{\prime
}:_{R}M^{\prime })=p.$ By \cite[Lemma 5.2(ii)]{11}, we have that
 $\varphi ^{-1}($ $P^{\prime })$ is a graded prime submodule of $M$
containing $\varphi ^{-1}($ $N^{\prime }).$  It is easy to see
$(\varphi ^{-1}(P^{\prime }):_{R}M)=p.$ Therefore $\varphi ^{-1}($
$N^{\prime })$ satisfies the gr-primeful property.
\end{proof}
\begin{theorem}
Let $R$ be a $G$-graded ring and $M,$ $M^{\prime }$ be two graded
$R$-modules. Let $\varphi :M\rightarrow M^{\prime }$ be a graded
epimorphism and $N$ a graded submodule of $M$ containing Ker$\varphi .$ If $N$ satisfies the
gr-primeful property, then $\varphi (N)\ $satisfies the
gr-primeful property.
\end{theorem}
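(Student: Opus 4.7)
The plan is to mimic the structure of Theorem 3.6 but in the forward direction: take a graded prime ideal $p$ of $R$ with $(\varphi(N):_R M') \subseteq p$, pull it back to produce a graded prime submodule $P$ of $M$ via the gr-primeful property of $N$, and then push $P$ forward through $\varphi$ to obtain the required graded prime submodule $P' = \varphi(P)$ of $M'$ containing $\varphi(N)$ with $(P':_R M') = p$.

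The first step will be to prove the key identity $(N :_R M) = (\varphi(N):_R M')$, which relies crucially on the hypothesis $\ker\varphi \subseteq N$. The inclusion $\subseteq$ is immediate from surjectivity, since $r M \subseteq N$ forces $r M' = \varphi(r M) \subseteq \varphi(N)$. For the reverse inclusion, if $r M' \subseteq \varphi(N)$, then for any $m \in M$ we have $\varphi(rm) \in \varphi(N)$, and the containment $\ker\varphi \subseteq N$ promotes this back to $rm \in N$. In particular, any graded prime ideal $p$ with $(\varphi(N):_R M') \subseteq p$ also satisfies $(N :_R M) \subseteq p$, so the gr-primeful property of $N$ yields a graded prime submodule $P$ of $M$ with $N \subseteq P$ and $(P:_R M) = p$.

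The main obstacle will be verifying that $P' := \varphi(P)$ is a graded prime submodule of $M'$. Since $\varphi$ is a graded homomorphism, $\varphi(P)$ is automatically a graded submodule of $M'$; and because $\ker\varphi \subseteq N \subseteq P$, one has $\varphi^{-1}(\varphi(P)) = P$, so $\varphi(P) \neq M'$ (otherwise $P = M$). For primeness, take $r \in h(R)$ and $m' \in h(M')$ with $rm' \in \varphi(P)$. Here I would use the standard fact that for a graded epimorphism $\varphi(M_g) = M'_g$, so one may lift $m' = \varphi(m)$ for some $m \in h(M)$; then $\varphi(rm) = rm' \in \varphi(P)$, and $\varphi^{-1}(\varphi(P)) = P$ forces $rm \in P$. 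The graded primeness of $P$ then gives either $m \in P$, whence $m' \in \varphi(P)$, or $r \in (P:_R M) = p$, whence $r M' = \varphi(rM) \subseteq \varphi(P)$, i.e.\ $r \in (\varphi(P):_R M')$.

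Finally, applying the identity from the first step with $P$ in place of $N$ (valid because $\ker\varphi \subseteq N \subseteq P$) gives $(\varphi(P):_R M') = (P:_R M) = p$. Since $N \subseteq P$ trivially implies $\varphi(N) \subseteq \varphi(P)$, we have exhibited a graded prime submodule of $M'$ containing $\varphi(N)$ whose colon ideal equals $p$, completing the verification that $\varphi(N)$ satisfies the gr-primeful property. The delicate points to write out carefully are the lifting of homogeneous elements under the graded epimorphism and the repeated use of $\ker\varphi \subseteq N$ to transfer colon ideals across $\varphi$.
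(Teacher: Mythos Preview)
Your proposal is correct and follows essentially the same route as the paper: show $(N:_R M)\subseteq p$, apply the gr-primeful property of $N$ to obtain a graded prime $P\supseteq N$ with $(P:_R M)=p$, and then check that $\varphi(P)$ is a graded prime submodule of $M'$ containing $\varphi(N)$ with $(\varphi(P):_R M')=p$. The only difference is that the paper cites an external lemma for the primeness of $\varphi(P)$ and leaves $(\varphi(P):_R M')=p$ as ``easy to see'', whereas you spell out both steps explicitly via the identity $(K:_R M)=(\varphi(K):_R M')$ for graded submodules $K\supseteq\ker\varphi$.
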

\begin{proof}
Let $p$ be a graded prime ideal of $R$ such that $(\ \varphi (N)\
:_{R}M^{\prime })\subseteq p.$ We show that $(N:_{R}M)\subseteq p.$
Let $r\in (N:_{R}M)\cap h(R)$ so $rM\subseteq N.$ Using the fact
that $\varphi $ is a graded epimorphism, we have $r\varphi
(M)=rM^{\prime }\subseteq \varphi ($ $N).$ Hence $r\in (\ \varphi
(N)\ :_{R}M^{\prime })\subseteq p.$ Thus $(N:_{R}M)\subseteq p.$
Since $N^{\ }$ satisfies the gr-primeful property, there exists a
graded prime submodule $P$ of $M$ containing $N$ such that $(P:_{R}M
)=p.$ Since Ker$\varphi \subseteq P$ by \cite[Lemma 5.2(i)]{11}, we
have $\varphi (P)$ is a graded prime submodule of $M^{\prime }$
containing $\varphi (N).$ It is easy to see $(\varphi
(P):_{R}M^{\prime })=p.$ Therefore $\varphi (N)$ satisfies the
gr-primeful property.
\end{proof}

\begin{theorem}
Let $R$ be a $G$-graded ring, $M$ a graded $R$-module and
$\{N_{i}:1\leq i\leq n\}$ be a finite collection of graded
submodules of $M$ satisfying the gr-primeful property. Then $\cap _{i=1}^{n}N_{i}$ satisfies the gr-primeful property.
\end{theorem}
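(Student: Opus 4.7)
The plan is to use the graded prime ideal $p$ to single out one of the $N_{i}$ and then invoke its gr-primeful property directly. Set $N=\cap_{i=1}^{n}N_{i}$ and suppose $p$ is a graded prime ideal of $R$ with $(N:_{R}M)\subseteq p$. First I would record the routine identity $(N:_{R}M)=\cap_{i=1}^{n}(N_{i}:_{R}M)$, which holds because an element $r\in R$ satisfies $rM\subseteq \cap_{i}N_{i}$ if and only if $rM\subseteq N_{i}$ for every $i$. Thus $\cap_{i=1}^{n}(N_{i}:_{R}M)\subseteq p$.

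The key step, and the main obstacle, is to extract some index $j$ with $(N_{j}:_{R}M)\subseteq p$. Each $(N_{i}:_{R}M)$ is a graded ideal by Lemma 3.1(1), and the product $\prod_{i=1}^{n}(N_{i}:_{R}M)$ lies inside the intersection and hence inside $p$. I would then argue by induction on $n$: if $I_{1}I_{2}\subseteq p$ with $I_{1},I_{2}$ graded ideals and $p$ a graded prime ideal, and if $I_{1}\not\subseteq p$, then since $I_{1}$ is graded it contains a homogeneous element $r\notin p$; for every homogeneous $s\in I_{2}$ the product $rs\in p$ forces $s\in p$ by graded primeness, whence $I_{2}\subseteq p$ (again using that $I_{2}$ is graded). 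Applied repeatedly, this produces the desired $j$ with $(N_{j}:_{R}M)\subseteq p$.

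Once such a $j$ is chosen, I would invoke the gr-primeful property of $N_{j}$: there exists a graded prime submodule $P$ of $M$ with $N_{j}\subseteq P$ and $(P:_{R}M)=p$. Since $N=\cap_{i=1}^{n}N_{i}\subseteq N_{j}\subseteq P$, this $P$ witnesses the gr-primeful property for $N$, completing the proof. The only technical subtlety is the graded version of the prime-avoidance-style step used to pick $j$; once one is willing to use that a graded ideal not contained in a graded prime admits a homogeneous witness outside it, the rest of the argument is purely formal.
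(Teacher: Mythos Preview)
Your proposal is correct and follows essentially the same route as the paper's proof: establish $(\cap_i N_i:_R M)=\cap_i (N_i:_R M)$, deduce that some $(N_j:_R M)\subseteq p$, and then apply the gr-primeful property of $N_j$. The only difference is cosmetic: the paper cites external lemmas (\cite[Lemma 2.6]{6} and \cite[Proposition 1.4]{23}) for the colon-intersection identity and the graded prime-absorption step, whereas you supply short direct arguments for both.
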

\begin{proof}
Let $p$ be a graded prime ideal of $R$ such that $(\cap
_{i=1}^{n}N_{i}:_{R}M)\subseteq p$. By \cite[Lemma 2.6]{6}, we have
$(\cap _{i=1}^{n}N_{i}:_{R}M)=\cap _{i=1}^{n}(N_{i}:_{R}M).$ Since
$\cap _{i=1}^{n}(N_{i}:_{R}M)\subseteq p$ by \cite[Proposition
1.4]{23}, we have $(N_{j}:_{R}M)\subseteq p$ for some $j\in
\{1,2,...,n\}.$ Since $N_{j}$ satisfies the gr-primeful property,
there exists a graded prime submodule $P$ of $M$ containing $N_{j}$
with $(P:_{R}M)=p$ and so $\cap
_{i=1}^{n}N_{i}\subseteq P.$ Thus $\cap _{i=1}^{n}N_{i}$ satisfies the gr-primeful property.
\end{proof}
 \section{Gragded primary-like submodules}


\begin{definition}
Let $R$ be a $G$-graded ring and $M$ a graded $R$%
-module. A proper graded submodule $N$ of $M$ is said to be
\textit{a graded primary-like submodule} if whenever $r\in h(R)$ and
$m\in h(M)$ with $rm\in N$, then either $r\in (N:_{R}M)$ or $m\in
$Gr$_{M}(N).$
\end{definition}
\begin{theorem}
Let $R$ be a $G$-graded ring and $M $ a graded $R$-module. If $N$ is
a graded primary-like submodule of $M$ satisfying the gr-primeful
property, then $(N:_{R}M)$ is a graded primary ideal of $R$.
\end{theorem}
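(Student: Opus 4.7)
The plan is to unpack both definitions and reduce the question to the identity $(\mathrm{Gr}_M(N):_R M) = \mathrm{Gr}((N:_R M))$ established in Theorem 3.5 (which is exactly where the gr-primeful hypothesis enters).

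First I would observe that $(N:_R M)$ is already known to be a proper graded ideal of $R$: properness follows from $N \neq M$ (otherwise $1 \in (N:_R M)$ forces $M \subseteq N$), and the graded ideal property is Lemma 3.1(1). So I only need to verify the defining condition of a graded primary ideal for homogeneous elements.

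Next, take $a, b \in h(R)$ with $ab \in (N:_R M)$ and suppose $a \notin (N:_R M)$; the goal is to show $b \in \mathrm{Gr}((N:_R M))$. For any $m \in h(M)$, the element $bm$ lies in $h(M)$ (since $R_g M_h \subseteq M_{gh}$), and $a(bm) = (ab)m \in N$. Applying the graded primary-like hypothesis to the pair $a \in h(R)$, $bm \in h(M)$ yields $a \in (N:_R M)$ or $bm \in \mathrm{Gr}_M(N)$. By assumption $a \notin (N:_R M)$, so $bm \in \mathrm{Gr}_M(N)$ for every homogeneous $m$. Since $\mathrm{Gr}_M(N)$ is a graded submodule and $M$ is generated as an abelian group by $h(M)$, this upgrades to $bM \subseteq \mathrm{Gr}_M(N)$, i.e.\ $b \in (\mathrm{Gr}_M(N):_R M)$.

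Finally, here is where the gr-primeful hypothesis pays off: by Theorem 3.5, $(\mathrm{Gr}_M(N):_R M) = \mathrm{Gr}((N:_R M))$, so $b \in \mathrm{Gr}((N:_R M))$, completing the verification. The whole argument is short and mostly bookkeeping; the one step that could trip one up is remembering to invoke Theorem 3.5 (rather than only Lemma 3.4, whose inclusion goes the wrong way for this conclusion) to translate the containment $bM \subseteq \mathrm{Gr}_M(N)$ into membership in the graded radical of the ideal $(N:_R M)$. That conversion, and the mild observation that $bm$ remains homogeneous, are the only subtleties.
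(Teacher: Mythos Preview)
Your proposal is correct and follows essentially the same approach as the paper: both arguments take homogeneous $a,b$ with $ab\in(N:_{R}M)$ and $a\notin(N:_{R}M)$, use the graded primary-like hypothesis to conclude $bm\in\mathrm{Gr}_{M}(N)$ for every homogeneous $m$, and then invoke Theorem~3.5 to convert $b\in(\mathrm{Gr}_{M}(N):_{R}M)$ into $b\in\mathrm{Gr}((N:_{R}M))$. The only cosmetic difference is that the paper decomposes an arbitrary $m\in M$ into homogeneous components, whereas you argue directly with $m\in h(M)$ and then pass to all of $M$; these are equivalent formulations of the same step.
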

\begin{proof}
Suppose that $rs\in (N:_{R}M)$ and $s\notin (N:_{R}M)$ for some
$r,s\in h(R).$ We show that $r\in ($Gr$(N):_{R}M).$ Let $m=\sum_{g\in
G}m_{g}\in M.$ Hence $\sum_{g\in G}rsm_{g}\in N.$ Since $N$ is a
graded primary-like submodule of $M$, $rsm_{g}\in N$ and $s\notin
(N:_{R}M)$ for all $g\in G,$ we conclude that $rm_{g}\in $Gr$_{M}(N)$
for all $g\in G.$ Hence $rm\in
$Gr$_{M}(N).$ This shows that $r\in ($Gr$(N):M).$ Since $N$ satisfies the gr-primeful property, by Theorem 3.5, we have $r\in
($Gr$(N):_{R}M)=$Gr$((N:_{R}M)).$ Therefore $(N:_{R}M)$ is a graded
primary ideal of $R$.
\end{proof}


By \cite[Lemma 1.8]{23} and Theorem 4.2, we have the following
Corollary.
\begin{corollary}
Let $R$ be a $G$-graded ring and $M$ a graded $R$-module. If $N$ is
a graded primary-like submodule of $M$ satisfying the gr-primeful property, then $%
p=$Gr$((N:_{R}M))$ is a graded prime ideal of $R$ and we say that $N$
is a graded $p$-primary-like submodule of $M$.
\end{corollary}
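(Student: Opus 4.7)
The plan is essentially to assemble two ingredients that are already at hand. First, I would invoke Theorem 4.2 of the paper, which under exactly the hypotheses of the corollary guarantees that the colon ideal $(N:_{R}M)$ is a graded primary ideal of $R$. This is the substantive content; it has already been proved in the previous theorem, so nothing new is required here.

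Second, I would invoke the cited result \cite[Lemma 1.8]{23}, which (in the standard graded setting) states that the graded radical of a graded primary ideal is a graded prime ideal. Applying this fact to the graded primary ideal $(N:_{R}M)$ immediately yields that $p = \mathrm{Gr}((N:_{R}M))$ is a graded prime ideal of $R$, which is exactly what the corollary asserts. The final sentence is just terminological: since $p$ is a graded prime ideal associated with $N$ in this way, we are free to call $N$ a graded $p$-primary-like submodule of $M$.

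There is really no obstacle here; the corollary is a direct two-line combination of Theorem 4.2 with \cite[Lemma 1.8]{23}. The only thing to be careful about is to note that the gr-primeful hypothesis is used (through Theorem 4.2) to ensure $(N:_{R}M)$ is graded primary — without it one would only have the weaker containment $\mathrm{Gr}((N:_{R}M)) \subseteq (\mathrm{Gr}_{M}(N):_{R}M)$ from Lemma 3.4, and the argument would not close. So I would explicitly point to the use of the gr-primeful property when citing Theorem 4.2, then finish with the radical-of-primary-is-prime step.
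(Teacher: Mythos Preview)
Your proposal is correct and matches the paper's own justification exactly: the paper simply states that the corollary follows from Theorem 4.2 together with \cite[Lemma 1.8]{23}, which is precisely the two-step argument you outline. There is nothing to add.
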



\begin{theorem}
Let $R$ be a $G$-graded ring, $M$ a faithful graded multiplication $R$%
-module and $Q$ a graded primary deal of $R$. If $rm \in \ QM$ for
$r\in h(R),$ $m\in h(M),$ then $r\in Q$ or $m\in $Gr$_{M}(QM).$
\end{theorem}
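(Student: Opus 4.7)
The plan is to fix $r\in h(R)$ with $r\notin Q$ and show that $m\in $Gr$_{M}(QM)$, which together with the alternative case yields the stated dichotomy. By definition, Gr$_{M}(QM)$ is the intersection of all graded prime submodules of $M$ containing $QM$, so the concrete target is to verify $m\in P$ for an arbitrary such $P$. The bridge between the hypothesis on $r$ and the conclusion on $m$ will be the graded ideal $I:=(Rm:_{R}M)$: because $m$ is homogeneous, $Rm$ is a graded submodule of $M$, and because $M$ is a gr-multiplication module, $Rm=(Rm:_{R}M)M=IM$.

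The central step is to transfer $rm\in QM$ to an ideal-level inclusion. Multiplying through by $R$ and using $Rm=IM$ gives $(rI)M=R(rm)\subseteq QM$, so $rI\subseteq (QM:_{R}M)$. At this point I would invoke the fact, standard for a faithful gr-multiplication module, that $(QM:_{R}M)=Q$, which upgrades the inclusion to $rI\subseteq Q$. Since $Q$ is a graded primary ideal, $r\in h(R)\setminus Q$, and $I$ is a graded ideal, applying the primary condition to each homogeneous element of $I$ forces every homogeneous $a\in I$ to lie in Gr$(Q)$, whence $I\subseteq $Gr$(Q)$ (using that Gr$(Q)$ is itself a graded ideal).

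To close, let $P$ be any graded prime submodule of $M$ with $QM\subseteq P$. By \cite[Proposition 2.7(ii)]{7}, $(P:_{R}M)$ is a graded prime ideal of $R$; it contains $Q$, and therefore also contains Gr$(Q)$ by \cite[Proposition 1.2]{23}. Hence $m\in Rm=IM\subseteq $Gr$(Q)\,M\subseteq (P:_{R}M)M\subseteq P$, and intersecting over all such $P$ yields $m\in $Gr$_{M}(QM)$, as desired. I expect the main obstacle to be the equality $(QM:_{R}M)=Q$: it is the only step where the faithful and gr-multiplication hypotheses are genuinely used, and although the ungraded analogue is classical, in the graded setting I would need either to cite it or to supply a short verification on homogeneous elements.
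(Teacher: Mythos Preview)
Your argument is correct. The paper itself omits the proof entirely, pointing instead to \cite[Theorem 2.2]{14} and \cite[Lemma 2]{22}; the route you take---writing $Rm=IM$ via the gr-multiplication hypothesis, pushing $rm\in QM$ to the ideal level as $rI\subseteq (QM:_{R}M)$, using faithfulness to get $(QM:_{R}M)=Q$, then applying the graded primary condition with $r\notin Q$ to force $I\subseteq\mathrm{Gr}(Q)$, and finally feeding $IM\subseteq\mathrm{Gr}(Q)M$ into every graded prime over $QM$---is exactly the standard technique of those references, so you have essentially reconstructed the omitted proof.

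Two minor remarks. First, the step you flagged, $(QM:_{R}M)=Q$, is indeed the only place where faithfulness is used; it follows from the graded cancellation property for faithful gr-multiplication modules ($IM\subseteq JM\Rightarrow I\subseteq J$), and since $(QM:_{R}M)M=QM$ one obtains the equality. Second, once you have $I\subseteq\mathrm{Gr}(Q)$ you can finish slightly faster: by \cite[Theorem 9]{22} (which the paper itself invokes in Corollary 4.7) one has $\mathrm{Gr}_{M}(QM)=\mathrm{Gr}(Q)M$ for gr-multiplication modules, so $m\in IM\subseteq\mathrm{Gr}(Q)M=\mathrm{Gr}_{M}(QM)$ directly, without the explicit pass through an arbitrary graded prime $P$.
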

\begin{proof}
The proof is similar to the proof of \cite[Theorem 2.2]{14} and
\cite[Lemma 2]{22},  so we omit it.
\end{proof}
\begin{corollary}
Let $R$ be a $G$-graded ring, $M$ a faithful graded multiplication $R$%
-module and $Q$ a graded primary ideal of $R$ such that $M\neq QM$,
then $QM$ is a graded primary-like submodule of $M$.
\end{corollary}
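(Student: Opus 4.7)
The plan is to derive this corollary as a direct consequence of Theorem 4.4, essentially by unpacking the definition of graded primary-like submodule and observing that the hypothesis $M \neq QM$ guarantees properness.

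First I would note that $QM$ is a graded submodule of $M$ (as $Q$ is a graded ideal and $M$ is graded, $QM$ decomposes homogeneously), and it is proper precisely because of the assumption $M \neq QM$. Hence $QM$ is a candidate for being a graded primary-like submodule.

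Next, to verify the defining property, I would take arbitrary $r \in h(R)$ and $m \in h(M)$ with $rm \in QM$. Theorem 4.4 applies directly in this setting (faithful graded multiplication module $M$, graded primary ideal $Q$) and yields the dichotomy $r \in Q$ or $m \in \mathrm{Gr}_M(QM)$. In the second case we are done. In the first case, $r \in Q$ immediately gives $rM \subseteq QM$, so $r \in (QM :_R M)$, again matching the conclusion required by Definition 4.1.

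The only nontrivial ingredient is Theorem 4.4 itself, whose proof is deferred to the earlier references \cite{14, 22}; once that is taken as given, the present argument is a one-line translation: the alternative ``$r \in Q$'' is upgraded to ``$r \in (QM :_R M)$'' via the trivial inclusion $QM \subseteq QM$. There is no serious obstacle here, and no appeal to the gr-primeful property is needed at this step.
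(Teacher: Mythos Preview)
Your proof is correct and follows essentially the same route as the paper: apply Theorem 4.4 to obtain $r\in Q$ or $m\in\mathrm{Gr}_M(QM)$, then note $Q\subseteq (QM:_R M)$ to finish. The paper's version is slightly terser but the logic is identical.
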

\begin{proof}
Let $rm\in QM$ where $r\in h(R)$ and $m\in h(M).$ So $r\in
Q\subseteq (QM:_{R}M)$ or $m\in $Gr$_{M}(QM)$ by Theorem 4.4.
Therefore, $QM$ is a graded primary-like submodule of $M.$
\end{proof}

\begin{theorem}
Let $R$ be a $G$-graded ring, $M$ a graded multiplication $R$-module
and $N$ a proper graded submodule of $M.$ If $N$ satisfies the
gr-prmeful property, then the following statements are equivalent.
\begin{enumerate}[\upshape (i)]

\item $N$ is a graded primary-like submodule of $M$;

\item $(N:_{R}M)$ is a graded primary ideal of $R$;

\item $N=QM$ for some graded primary ideal $Q$ of $R$ with $Ann(M)=(0:_{R}M) \subseteq Q;$

\item $N$ is graded primary submodule of $M$.
\end{enumerate}

\end{theorem}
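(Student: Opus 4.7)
My plan is to establish the cyclic chain of implications $(i)\Rightarrow(ii)\Rightarrow(iv)\Rightarrow(iii)\Rightarrow(i)$. The step $(i)\Rightarrow(ii)$ is exactly Theorem 4.2, and is the only place where the gr-primeful hypothesis is consumed. For $(ii)\Rightarrow(iv)$ I would take $rm\in N$ with $r\in h(R)$, $m\in h(M)$ and $m\notin N$, and show $r\in\text{Gr}((N:_{R}M))$. Since $m$ is homogeneous, $Rm$ is a graded submodule, and the multiplication hypothesis gives $Rm=JM$ for some graded ideal $J$. From $r\cdot Rm\subseteq N$ I obtain $rJ\subseteq(N:_{R}M)$; if we had $J\subseteq(N:_{R}M)$ then $m\in Rm=JM\subseteq(N:_{R}M)M=N$, contradicting $m\notin N$. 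Hence there is a homogeneous $s\in J\setminus(N:_{R}M)$, and graded primaryness of $(N:_{R}M)$ together with $rs\in(N:_{R}M)$ forces $r\in\text{Gr}((N:_{R}M))$.

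For $(iv)\Rightarrow(iii)$ I would first check that $N$ graded primary implies $(N:_{R}M)$ graded primary, by a standard homogeneous computation: given $rs\in(N:_{R}M)$ with $r,s\in h(R)$ and $s\notin(N:_{R}M)$, choose a homogeneous $m\in M$ with $sm\notin N$ and apply the graded primary property of $N$ to $r(sm)\in N$. The multiplication hypothesis then gives $N=(N:_{R}M)M$, so taking $Q:=(N:_{R}M)$ yields (iii), with $\text{Ann}(M)\subseteq Q$ automatic.

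The main step is $(iii)\Rightarrow(i)$, for which I would like to apply Theorem 4.4; however, that theorem requires $M$ to be faithful. My plan is to reduce to the faithful setting via the quotient $\bar R:=R/\text{Ann}(M)$, which is a $G$-graded ring (because $\text{Ann}(M)$ is a graded ideal), and view $M$ as a faithful graded $\bar R$-module. Using $\text{Ann}(M)\subseteq Q$, the image $\bar Q:=Q/\text{Ann}(M)$ is a graded primary ideal of $\bar R$ with $\bar Q\bar M=QM=N$, and because the graded prime $R$-submodules of $M$ coincide with the graded prime $\bar R$-submodules of $\bar M$, also $\text{Gr}_{\bar M}(N)=\text{Gr}_{M}(N)$. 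Theorem 4.4 applied over $\bar R$ then gives, for homogeneous $r\in R$ and $m\in M$ with $rm\in N$, either $\bar r\in\bar Q$ (whence $r\in Q\subseteq(N:_{R}M)$) or $m\in\text{Gr}_{M}(N)$, which is precisely the graded primary-like condition for $N$.

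The main obstacle I anticipate is the technical verification in the reduction step: that $\bar M$ is graded multiplication over $\bar R$, that $\bar Q$ remains graded primary, and that $\text{Gr}_{M}(N)$ is preserved under the identification of graded prime submodules. Each check is routine once the graded structure on $\bar R$ is set up, but all three are needed in order to legitimately invoke Theorem 4.4 and close the cycle.
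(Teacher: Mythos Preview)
Your proposal is correct and follows essentially the same architecture as the paper: the paper also uses Theorem~4.2 for $(i)\Rightarrow(ii)$ and, for $(iii)\Rightarrow(i)$, passes to $\bar R=R/\mathrm{Ann}(M)$ so that $M$ becomes faithful and Corollary~4.5 (equivalently Theorem~4.4) applies. The only difference is that the paper handles the block $(ii)\Leftrightarrow(iii)\Leftrightarrow(iv)$ by a single citation to \cite[Corollary~2.5]{14}, whereas you give direct self-contained arguments for $(ii)\Rightarrow(iv)$ and $(iv)\Rightarrow(iii)$; your arguments are valid and make the proof independent of that external reference, at the cost of a little extra length.
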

\begin{proof}
$(i)\Rightarrow (ii)$By Theorem 4.2.

$(ii)\Leftrightarrow (iii)\Leftrightarrow (iv)$ By \cite[Corollary
2.5]{14}.

$(iii)\Rightarrow (i)$ Let $Q$ be a graded primary ideal of $R$ such that $%
N=QM $ and $Ann(M)\subseteq Q$. Then $M$ is a faithful graded
$R/Ann(M)$-module. The result follows from Corollary 4.5.
\end{proof}
\begin{corollary}
 Let $R$ be a $G$-graded ring and $M$ a graded multiplication $R$-module. If $%
N$ is a graded primary-like submodule of $M$ satisfying the gr-prmeful property, then Gr$(N)$ is a graded prime submodule of
$M$.
\end{corollary}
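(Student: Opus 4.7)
The plan is to identify $\operatorname{Gr}_M(N)$ explicitly as a graded prime submodule that is handed to us by the gr-primeful property, and to use the graded multiplication hypothesis to pin down the equality.

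First I would apply Theorem 4.2 to conclude that $(N:_{R}M)$ is a graded primary ideal of $R$. Then, by Corollary 4.3, $p:=\mathrm{Gr}((N:_{R}M))$ is a graded prime ideal of $R$. Since $(N:_{R}M)\subseteq p$ and $N$ satisfies the gr-primeful property, there exists a graded prime submodule $P$ of $M$ with $N\subseteq P$ and $(P:_{R}M)=p$. In particular $\mathrm{Gr}_{M}(N)\subseteq P\subsetneq M$, so $\mathrm{Gr}_{M}(N)$ is proper.

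Next I would convert this inclusion into an equality using the graded multiplication hypothesis. Since $M$ is a graded multiplication $R$-module, every graded submodule $L$ of $M$ satisfies $L=(L:_{R}M)M$. Applying this to $P$ gives $P=(P:_{R}M)M=pM$. On the other hand, Theorem 3.5 applies (because $N$ satisfies the gr-primeful property) and yields $(\mathrm{Gr}_{M}(N):_{R}M)=\mathrm{Gr}((N:_{R}M))=p$. Applying the multiplication identity now to $\mathrm{Gr}_{M}(N)$ gives $\mathrm{Gr}_{M}(N)=(\mathrm{Gr}_{M}(N):_{R}M)M=pM=P$. Hence $\mathrm{Gr}_{M}(N)$ coincides with the graded prime submodule $P$ and is therefore itself graded prime.

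There is essentially no obstacle in this argument: every ingredient is already available from the preceding results. The only step that requires a moment's care is checking that the \emph{two} different characterisations of $\mathrm{Gr}_{M}(N)$ — as the radical intersection on one hand and as $(\mathrm{Gr}_{M}(N):_{R}M)M$ on the other — match up with the prime submodule $P$ produced by the gr-primeful property, and this is exactly what the combination of Theorem 3.5 with the graded multiplication identity delivers.
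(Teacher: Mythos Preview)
Your argument is correct, but it follows a different path from the paper's proof. The paper invokes Theorem~4.6 to write $N=QM$ for a graded primary ideal $Q$, then appeals to the external radical formula $\mathrm{Gr}_{M}(QM)=\mathrm{Gr}(Q)M$ for graded multiplication modules \cite[Theorem~9]{22}, and finally cites \cite[Lemma~1.8]{23} and \cite[Proposition~2.6(ii)]{12} to conclude that $\mathrm{Gr}(Q)M$ is a graded prime submodule. Your approach bypasses Theorem~4.6 and the external radical formula entirely: you use the gr-primeful property directly to produce a concrete graded prime submodule $P$ with $(P:_{R}M)=p=\mathrm{Gr}((N:_{R}M))$, and then the multiplication identity together with Theorem~3.5 forces $\mathrm{Gr}_{M}(N)=pM=P$. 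The advantage of your route is that it is more self-contained within the paper (no need for \cite[Theorem~9]{22} or \cite[Proposition~2.6(ii)]{12}) and makes the identification of $\mathrm{Gr}_{M}(N)$ with a specific prime submodule completely explicit; the paper's route, on the other hand, packages the computation of $\mathrm{Gr}_{M}(N)$ into a single known formula and proceeds more quickly once that formula is granted.
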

\begin{proof}
Let $N$ be a graded primary-like submodule of $M.$ By Theorem 4.6, $%
N=QM$ for some graded primary ideal $Q$ of $R$ containing $Ann(M).$
Since $M$ is a graded multiplication module by \cite[Theorem 9]{22},
 Gr$_{M}(QM)=$Gr$(Q)M.$ By \cite[Lemma 1.8]{23} and \cite[Proposition
2.6(ii)]{12}, we conclude that Gr$(Q)M\ $ is a graded prime
submodule. Therefore Gr$(N)$ is a graded prime submodule of $M.$
\end{proof}

\begin{theorem}
Let $R$ be a $G$-graded ring, $M$ a non-zero graded $R$-module and
$N$ a proper graded submodule of $M$. Then the following statements
are equivalent.
\begin{enumerate}[\upshape (i)]

\item $N$ is a graded primary-like submodule of $M;$

\item  $(N:_{R}M)=(N:_{R}U)$ for every graded submodule $U$ of $M$ such that Gr$_{M}(N)\subsetneq U;$
\end{enumerate}

\end{theorem}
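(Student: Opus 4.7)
I will prove the two implications separately.

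For $(i) \Rightarrow (ii)$, let $U$ be a graded submodule of $M$ with $\mathrm{Gr}_{M}(N) \subsetneq U$. The inclusion $(N:_{R} M) \subseteq (N:_{R} U)$ follows from $U \subseteq M$. For the reverse direction, since $(N:_{R} U)$ is a graded ideal by Lemma~3.1, it suffices to test a homogeneous element $r \in (N:_{R} U) \cap h(R)$ and show $r \in (N:_{R} M)$. The strict inclusion of graded submodules $\mathrm{Gr}_{M}(N) \subsetneq U$ guarantees the existence of a homogeneous $u \in U \setminus \mathrm{Gr}_{M}(N)$: any $v \in U \setminus \mathrm{Gr}_{M}(N)$ has some homogeneous component lying in $U$ (because $U$ is graded) but outside $\mathrm{Gr}_{M}(N)$ (because $\mathrm{Gr}_{M}(N)$ is graded). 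Then $ru \in rU \subseteq N$ with $r \in h(R)$, $u \in h(M)$, and $u \notin \mathrm{Gr}_{M}(N)$, so the graded primary-like property of $N$ applied to the pair $(r,u)$ delivers $r \in (N:_{R} M)$.

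For $(ii) \Rightarrow (i)$, take $r \in h(R)$ and $m \in h(M)$ with $rm \in N$ and $m \notin \mathrm{Gr}_{M}(N)$; I want $r \in (N:_{R} M)$. I will apply (ii) to the graded submodule $U := Rm + \mathrm{Gr}_{M}(N)$, which strictly contains $\mathrm{Gr}_{M}(N)$ because $m \in U \setminus \mathrm{Gr}_{M}(N)$. By hypothesis, $(N:_{R} U) = (N:_{R} M)$, so it is enough to verify $r \in (N:_{R} U)$, that is, $rU \subseteq N$. Decomposing $rU = R(rm) + r \cdot \mathrm{Gr}_{M}(N)$, the first summand is contained in $N$ by hypothesis.

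The main obstacle is therefore the verification that $r \cdot \mathrm{Gr}_{M}(N) \subseteq N$. I expect this to be handled by writing $\mathrm{Gr}_{M}(N) = \bigcap_{P} P$ as $P$ ranges over graded prime submodules of $M$ containing $N$, and observing that each such $P$ with $m \notin P$ satisfies $r \in (P:_{R} M)$ (from $rm \in P$ and the graded prime property), so that $r$ sends $M$—and in particular $\mathrm{Gr}_{M}(N)$—into every relevant $P$; pushing this information through the intersection is the delicate point. Once this inclusion is established, (ii) applied to $U$ immediately yields $r \in (N:_{R} M)$, completing the implication.
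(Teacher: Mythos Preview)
Your argument for $(i)\Rightarrow(ii)$ is correct and essentially identical to the paper's: pick a homogeneous $u\in U\setminus\mathrm{Gr}_M(N)$, test homogeneous $r$, and use the primary-like property. For $(ii)\Rightarrow(i)$ you also set up the same submodule $U=\mathrm{Gr}_M(N)+Rm$ as the paper and correctly isolate the obstruction: to obtain $r\in(N:_RU)$ one would need $r\,\mathrm{Gr}_M(N)\subseteq N$.

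Your proposed route through the prime decomposition, however, cannot close this gap. For every graded prime $P\supseteq N$ one has $\mathrm{Gr}_M(N)\subseteq P$ tautologically, hence $r\,\mathrm{Gr}_M(N)\subseteq rP\subseteq P$ holds for \emph{any} $r$, with no use of the hypotheses on $r$ and $m$; intersecting over all such $P$ therefore yields only $r\,\mathrm{Gr}_M(N)\subseteq\mathrm{Gr}_M(N)$, not $r\,\mathrm{Gr}_M(N)\subseteq N$. The dichotomy ``$m\in P$'' versus ``$r\in(P:_RM)$'' adds nothing, since in both cases the conclusion is again just containment in $P$. So the ``delicate point'' you flag is genuinely unresolved by your sketch.

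For comparison, the paper does \emph{not} try to prove $r\,\mathrm{Gr}_M(N)\subseteq N$. After recording $(N:_RU)=(N:_RM)$, it instead notes that $r_g(N+Rm_h)=r_gN+Rr_gm_h\subseteq N$, whence $r_g\in(N:_R\,N+Rm_h)$, and then asserts $(N:_R\,N+Rm_h)=(N:_RM)$. You should be aware that this last equality is not an instance of hypothesis~(ii) as stated, since $N+Rm_h$ need not contain $\mathrm{Gr}_M(N)$, and the paper gives no separate justification for it. Thus your diagnosis of where the difficulty lies is accurate, but neither your prime-decomposition idea nor the step the paper records actually bridges it.
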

\begin{proof}
$(i)\Rightarrow (ii)$ Assume that $N$ is a graded primary-like
submodule of $M$ and let $U$ be any graded submodule of $M$ such that
Gr$_{M}(N)\subsetneq U.$ We show that $(N:_{R}U)=(N:_{R}M).$ Clearly $%
(N:_{R}M)\subseteq (N:_{R}U).$ Let $r=\sum_{g\in G}r_{g}\in
(N:_{R}U).$ Hence for any $g\in G,$ we have $r_{g}U\subseteq N.$ Since
Gr$_{M}(N)\subsetneq U,$ there exists $u_{h}\in h(U)-$Gr$_{M}(N).$
Since $N$ is a graded primary-like submodule of $M,$ $r_{g}u_{h}\in N$ and $%
u_{h}\notin $Gr$_{M}(N)$ for all $g\in G$, we conclude that $r_{g}\in
(N:_{R}M) $ for all $g\in G.$ Hence $r\in (N:_{R}M).$ Therefore $%
(N:_{R}U)=(N:_{R}M).$

$(ii)\Rightarrow (i):$ Suppose that $r_{g}m_{h}\in N$ and $m_{h}\notin $
Gr$_{M}(N)$ for some $r_{g}\in h(R)$ and $m_{h}\in h(M).$ By Lemma
3.1, $U=$Gr$_{M}(N)+Rm_{h}$ is a graded submodule of
$M$. Since $m_{h}\notin $ Gr$_{M}(N),$ we have Gr$_{M}(N)\subsetneq U\subseteq M.$ By (ii), $%
(N:_{R}U)=(N:_{R}M).$ Since $r_{g}m_{h}\in N,$ we have $%
r_{g}(N+Rm_{h})=r_{g}N+Rr_{g}m\subseteq N.$ This yields that $%
r_{g}\in (N:_{R}N+Rm_{h})=(N:_{R}M).$ Therefore $N$ is a graded
primary-like submodule of $M.$
\end{proof}

\begin{theorem}
Let $R$ be a $G$-graded ring and $M,$ $M^{\prime }$ be two graded
$R$-modules and $N^{\prime }$ a graded submodule of $M^{\prime }.$
Let $\varphi :M\rightarrow M^{\prime }$ be a graded epimorphism. If
$N^{\prime }$ is a graded primary-like submodule of $M^{\prime }$,
then$\ \varphi ^{-1}($ $N^{\prime }) $ is a graded primary-like
submodule of $M$.
\end{theorem}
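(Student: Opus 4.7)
The plan is to verify the defining property of a graded primary-like submodule directly for $\varphi^{-1}(N')$. First, since $\varphi$ is a graded homomorphism, $\varphi^{-1}(N')$ is a graded submodule of $M$; it is proper, because $\varphi^{-1}(N')=M$ would force $N'=\varphi(M)=M'$ by surjectivity, contradicting properness of $N'$. Next I would pick homogeneous $r\in h(R)$ and $m\in h(M)$ with $rm\in \varphi^{-1}(N')$, so that $r\varphi(m)=\varphi(rm)\in N'$ and $\varphi(m)\in h(M')$. Applying the graded primary-like hypothesis on $N'$ splits the argument into two cases.

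In the first case, $r\in (N':_{R}M')$, so $rM'\subseteq N'$. Surjectivity of $\varphi$ gives $\varphi(rM)=rM'\subseteq N'$, hence $rM\subseteq \varphi^{-1}(N')$, i.e.\ $r\in (\varphi^{-1}(N'):_{R}M)$. In the second case, $\varphi(m)\in \mathrm{Gr}_{M'}(N')$, and I need to deduce $m\in \mathrm{Gr}_{M}(\varphi^{-1}(N'))$. This is the main obstacle. I would handle it by establishing the inclusion $\varphi^{-1}(\mathrm{Gr}_{M'}(N'))\subseteq \mathrm{Gr}_{M}(\varphi^{-1}(N'))$: for any graded prime submodule $P$ of $M$ containing $\varphi^{-1}(N')$, the chain $\mathrm{Ker}\,\varphi\subseteq \varphi^{-1}(N')\subseteq P$ allows me to invoke \cite[Lemma 5.2(i)]{11}, so that $\varphi(P)$ is a graded prime submodule of $M'$, and surjectivity yields $\varphi(P)\supseteq \varphi(\varphi^{-1}(N'))=N'$. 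Then $\varphi(m)\in \mathrm{Gr}_{M'}(N')\subseteq \varphi(P)$, so $\varphi(m)=\varphi(p)$ for some $p\in P$, and $m-p\in \mathrm{Ker}\,\varphi\subseteq P$ gives $m\in P$; therefore $m$ lies in every graded prime submodule of $M$ containing $\varphi^{-1}(N')$.

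A minor edge case remains when $\mathrm{Gr}_{M'}(N')=M'$ (the convention when no graded prime submodule of $M'$ contains $N'$): the same kernel-containment argument shows that no graded prime submodule of $M$ can contain $\varphi^{-1}(N')$ either, for otherwise $\varphi(P)$ would be such a submodule of $M'$; thus $\mathrm{Gr}_{M}(\varphi^{-1}(N'))=M$ and the conclusion $m\in \mathrm{Gr}_{M}(\varphi^{-1}(N'))$ is automatic. Combining both cases, $\varphi^{-1}(N')$ meets the definition of a graded primary-like submodule of $M$.
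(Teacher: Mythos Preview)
Your proof is correct and follows essentially the same line as the paper's: verify the defining condition for $\varphi^{-1}(N')$ directly, reduce to $N'$ via surjectivity, and pass the conclusion back through the graded radical. The only substantive difference is at the radical step. The paper simply invokes \cite[Theorem 5.3(i)]{11} to get the equality $\varphi^{-1}(\mathrm{Gr}_{M'}(N'))=\mathrm{Gr}_{M}(\varphi^{-1}(N'))$, whereas you re-derive the inclusion you actually need, $\varphi^{-1}(\mathrm{Gr}_{M'}(N'))\subseteq \mathrm{Gr}_{M}(\varphi^{-1}(N'))$, from the more primitive \cite[Lemma 5.2(i)]{11} together with the kernel containment $\mathrm{Ker}\,\varphi\subseteq \varphi^{-1}(N')\subseteq P$. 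Your version is slightly more self-contained and also makes properness of $\varphi^{-1}(N')$ and the edge case $\mathrm{Gr}_{M'}(N')=M'$ explicit, both of which the paper leaves implicit; conversely, citing \cite[Theorem 5.3(i)]{11} is shorter and gives the full equality rather than just the inclusion.
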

\begin{proof}
Suppose that $r_{g}m_{h}\in \varphi ^{-1}($ $N^{\prime })$ and $r_{g}\notin $ $(\varphi ^{-1}($ $N^{\prime }):_{R}M)$ for some $%
r_{g}\in h(R)$ and $m_{h}\in h(M).$ Hence $\varphi
(r_{g}m_{h})=r_{g}\varphi (m_{h})\in N^{\prime }.$ Since
$r_{g}\notin $ $(\varphi ^{-1}($ $N^{\prime
}):_{R}M),$ we get $r_{g}\notin ($ $N^{\prime }:_{R}M^{\prime })$. Since $%
N^{\prime }$ is a graded primary-like submodule of $M^{\prime }$, $%
r_{g}\varphi (m_{h})\in N^{\prime }$ and $r_{g}\notin ($ $N^{\prime
}:_{R}M^{\prime }),$ we have $\varphi (m_{h})\in $Gr$_{M^{\prime }}($ $%
N^{\prime })$ it follows that $m_{h}\in $ $\varphi ^{-1}($Gr$_{M^{\prime }}($ $%
N^{\prime })).$  By \cite[Theorem 5.3(i)]{11}, we have $\varphi
^{-1}($Gr$_{M^{\prime }}($ $N^{\prime }))=$Gr$_{M}($ $\varphi
^{-1}(N^{\prime })).$ Hence $m_{h}\in $Gr$_{M}(\varphi ^{-1}(N^{\prime
})).$ Therefore $\varphi ^{-1}($ $N^{\prime })$ is a graded
primary-like submodule of $M$.
\end{proof}
\begin{lemma}
Let $R$ be a $G$-graded ring and $M,$ $M^{\prime }$ be two graded $R$%
-modules. Let $\varphi :M\rightarrow M^{\prime }$ be a graded
epimorphism and $N$ a graded submodule of $M$ containing
Ker $\varphi .$ Then $\varphi (G_{M}(N))=G_{M^{\prime }}(\varphi
(N)).$
\end{lemma}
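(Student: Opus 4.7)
The plan is to prove the equality by establishing a bijection between the graded prime submodules of $M$ containing $N$ and those of $M'$ containing $\varphi(N)$, induced by $\varphi$ on one side and $\varphi^{-1}$ on the other. The hypothesis $\mathrm{Ker}\,\varphi \subseteq N$ is what makes this correspondence tight: any graded submodule $P$ of $M$ with $N\subseteq P$ automatically contains $\mathrm{Ker}\,\varphi$, so \cite[Lemma 5.2(i)]{11} applies and $\varphi(P)$ is a graded prime submodule of $M'$; and of course $\varphi(\varphi^{-1}(P'))=P'$ for any submodule $P'\subseteq M'$ by surjectivity.

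For the inclusion $\varphi(\mathrm{Gr}_{M}(N))\subseteq \mathrm{Gr}_{M'}(\varphi(N))$, I would pick any graded prime submodule $P'$ of $M'$ with $\varphi(N)\subseteq P'$ and apply \cite[Lemma 5.2(ii)]{11} to see that $\varphi^{-1}(P')$ is a graded prime submodule of $M$ containing $N$; hence $\mathrm{Gr}_{M}(N)\subseteq \varphi^{-1}(P')$, so $\varphi(\mathrm{Gr}_{M}(N))\subseteq P'$. Intersecting over all such $P'$ yields the desired inclusion. Symmetrically, for the reverse direction, each graded prime submodule $P$ of $M$ containing $N$ produces, via \cite[Lemma 5.2(i)]{11}, a graded prime submodule $\varphi(P)$ of $M'$ containing $\varphi(N)$, so $\mathrm{Gr}_{M'}(\varphi(N))\subseteq \bigcap_{P}\varphi(P)$, where $P$ runs over the graded prime submodules of $M$ containing $N$.

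The main step—and the place where $\mathrm{Ker}\,\varphi\subseteq N$ is used in an essential way—is the identity $\bigcap_{P}\varphi(P)=\varphi\bigl(\bigcap_{P}P\bigr)=\varphi(\mathrm{Gr}_{M}(N))$. The nontrivial inclusion $\bigcap_{P}\varphi(P)\subseteq \varphi\bigl(\bigcap_{P}P\bigr)$ can be handled as follows: given $y\in \bigcap_{P}\varphi(P)$, fix a preimage $x\in M$ with $\varphi(x)=y$; for each $P$ there exists $x_{P}\in P$ with $\varphi(x_{P})=y$, so $x-x_{P}\in \mathrm{Ker}\,\varphi\subseteq N\subseteq P$, which forces $x\in P$. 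Thus $x\in \bigcap_{P}P=\mathrm{Gr}_{M}(N)$ and $y=\varphi(x)\in \varphi(\mathrm{Gr}_{M}(N))$. The degenerate cases in which no graded prime submodule of $M$ contains $N$, or none of $M'$ contains $\varphi(N)$, are absorbed by the same bijection: either index set being empty forces the other to be empty, and both graded radicals then equal the ambient module (with $\varphi(M)=M'$ by surjectivity).
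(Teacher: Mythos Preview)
Your argument is correct and complete: the bijection between graded prime submodules of $M$ containing $N$ and those of $M'$ containing $\varphi(N)$ via $P\mapsto\varphi(P)$ and $P'\mapsto\varphi^{-1}(P')$ is exactly the right tool, and your handling of the key identity $\bigcap_P\varphi(P)=\varphi\bigl(\bigcap_P P\bigr)$ (using $\mathrm{Ker}\,\varphi\subseteq N\subseteq P$ to show a common preimage lies in every $P$) and of the degenerate cases is clean.

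The paper itself gives no proof: it simply refers the reader to \cite[Theorem 5.3(ii)]{11} and omits the details. What you have written is precisely the kind of argument one would expect to find there, so your approach is in line with what the paper intends---you have just supplied the details the authors chose to suppress.
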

\begin{proof}
The proof is similar to the proof of \cite[Theorem 5.3(ii)]{11}, so
we omit it.
\end{proof}

\begin{theorem}
Let $R$ be a $G$-graded ring and $M,$ $M^{\prime }$ be two graded
$R$-modules. Let $\varphi :M\rightarrow M^{\prime }$ be a graded
epimorphism and $N$ a graded submodule of $M$ containing Ker $\varphi
.$ If $N$ is a graded primary-like submodule of $M$, then $\varphi
(N)\ $is a graded primary-like submodule of $M^{\prime }.$
\end{theorem}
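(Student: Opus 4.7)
The plan is to verify the defining condition of a graded primary-like submodule directly for $\varphi(N)$. I start with homogeneous elements $r\in h(R)$ and $m'\in h(M')$ with $rm'\in \varphi(N)$, and try to conclude that either $r\in(\varphi(N):_R M')$ or $m'\in \mathrm{Gr}_{M'}(\varphi(N))$.

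The first step is to lift $m'$ to a homogeneous preimage. Because $\varphi$ is a graded epimorphism, $\varphi(M_g)=M'_g$ for every $g\in G$, so any $m'\in h(M')$ has the form $m'=\varphi(m)$ for some $m\in h(M)$ of the same degree. Then $\varphi(rm)=r\varphi(m)=rm'\in \varphi(N)$, and since $\mathrm{Ker}\,\varphi\subseteq N$, the standard fact $\varphi^{-1}(\varphi(N))=N+\mathrm{Ker}\,\varphi=N$ yields $rm\in N$. This is the key translation that lets me invoke the primary-like hypothesis on $N$.

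Now apply the graded primary-like property of $N$ to the homogeneous pair $(r,m)$: either $r\in (N:_R M)$ or $m\in \mathrm{Gr}_M(N)$. In the first case, $rM\subseteq N$ gives $rM'=r\varphi(M)=\varphi(rM)\subseteq \varphi(N)$, so $r\in(\varphi(N):_R M')$. In the second case, I use Lemma 4.10, which asserts $\varphi(\mathrm{Gr}_M(N))=\mathrm{Gr}_{M'}(\varphi(N))$ whenever $\mathrm{Ker}\,\varphi\subseteq N$; applying $\varphi$ to $m\in \mathrm{Gr}_M(N)$ gives $m'=\varphi(m)\in \mathrm{Gr}_{M'}(\varphi(N))$.

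The only non-routine points are the two bookkeeping facts about graded epimorphisms. The first is the ability to lift homogeneous elements to homogeneous preimages, which is immediate from $\varphi(M_g)=M'_g$. The second, and main potential obstacle, is the compatibility of graded radicals with $\varphi$ under the hypothesis $\mathrm{Ker}\,\varphi\subseteq N$; this is exactly what Lemma 4.10 provides, so no additional work is required. Once these are in hand, the proof is a short case analysis as above.
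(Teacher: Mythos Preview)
Your argument is correct and follows essentially the same route as the paper's proof: lift the homogeneous element via the graded epimorphism, use $\mathrm{Ker}\,\varphi\subseteq N$ to pull $rm'\in\varphi(N)$ back to $rm\in N$, apply the primary-like hypothesis on $N$, and invoke Lemma~4.10 for the radical case. The only cosmetic difference is that the paper is organized contrapositively (assume $r\notin(\varphi(N):_R M')$, deduce $r\notin(N:_R M)$, then conclude $m'\in\mathrm{Gr}_{M'}(\varphi(N))$), whereas you do a direct two-case split; the content is identical.
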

\begin{proof}
Suppose that $rm^{\prime }\in \varphi ($ $N)$ and $r\notin $ $%
(\varphi ($ $N):_{R}M^{\prime })$ for some $r\in h(R)$ and
$m^{\prime }\in h(M^{\prime }).$ It is easy to see\textbf{\
}$r\notin $ $(N:_{R}M)$. Since $\varphi $ is a graded epimorphism,
there exists $m\in h(M)$ such that $\varphi (m)=m^{\prime }$ and
hence $r\varphi (m)=\varphi (rm)\in \varphi ($ $N).$ Thus there
exists $n\in h(N)\ $ such that $\varphi (rm)=\varphi ($ $n)$ this
implies $rm-n\in $Ker$ \varphi \subseteq N.$
This yields that $rm\in N.$ Since $N$ is a graded primary-like submodule of $M,$ $%
rm\in N$ and $r\notin (N:_{R}M)),$ we conclude that $m\in G_{M}(N).$ Hence $%
\varphi (m)=m^{\prime }\in \varphi (G_{M}(N)).$ By Lemma 4.10, we
have $m^{\prime }\in \varphi (G_{M}(N))=G_{M^{\prime }}(\varphi
(N)).$ Therefore $\varphi (N)$ is a graded primary-like submodule of
$M^{\prime }.$
\end{proof}


\begin{lemma}
Let $R$ be a $G$-graded ring and $M$ a graded $R$-module. Let
$S\subseteq h(R)$ be a multiplicatively closed subset of $R$ and $N$
a graded primary-like submodule of $M$ satisfying the gr-prmeful
property such that $S\cap Gr((N:_{R}M))=$ $\emptyset $. Then $%
S^{-1}(N:_{R}M)=(S^{-1}N:_{S^{-1}R}S^{-1}M).$
\end{lemma}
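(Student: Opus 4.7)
The plan is to prove the equality of the graded ideals $S^{-1}(N:_{R}M)$ and $(S^{-1}N:_{S^{-1}R}S^{-1}M)$ of $S^{-1}R$ by checking it on homogeneous elements, since both are graded (the former by construction, the latter because it is the colon of graded submodules $S^{-1}N\subseteq S^{-1}M$). The inclusion $S^{-1}(N:_{R}M)\subseteq(S^{-1}N:_{S^{-1}R}S^{-1}M)$ is direct: for $a\in(N:_{R}M)\cap h(R)$, $t\in S$, and any homogeneous $m/u\in h(S^{-1}M)$, we have $(a/t)(m/u)=am/(tu)\in S^{-1}N$ since $am\in N$.

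For the reverse inclusion, I would take a homogeneous $r/s\in(S^{-1}N:_{S^{-1}R}S^{-1}M)$ with $r\in h(R)$ and $s\in S$, and aim to produce $v\in S\cap h(R)$ with $vr\in(N:_{R}M)$; this yields $r/s=vr/vs\in S^{-1}(N:_{R}M)$. The strategy is to fix a homogeneous witness $m_{0}\in h(M)\setminus \mathrm{Gr}_{M}(N)$ (see the next paragraph). Since $(r/s)(m_{0}/1)\in S^{-1}N$, I would write $rm_{0}/s=n/t$ for some $n\in h(N)$ and $t\in S$, extracted from the appropriate homogeneous component of $S^{-1}N$. Because $S\subseteq h(R)$, the clearing element $u\in S$ satisfying $u(trm_{0}-sn)=0$ in $M$ is automatically homogeneous, so setting $v=ut\in S\cap h(R)$ gives $vrm_{0}=usn\in N$ with $vr\in h(R)$ and $m_{0}\in h(M)$. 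Applying the graded primary-like property to $(vr)m_{0}\in N$ forces $vr\in(N:_{R}M)$ or $m_{0}\in \mathrm{Gr}_{M}(N)$; the latter is excluded by the choice of $m_{0}$, so $vr\in(N:_{R}M)$, as required.

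The main obstacle is the existence of a witness $m_{0}\in h(M)\setminus \mathrm{Gr}_{M}(N)$, which is exactly where the gr-primeful property of $N$ and the hypothesis $S\cap \mathrm{Gr}((N:_{R}M))=\emptyset$ cooperate. If $\mathrm{Gr}_{M}(N)=M$, then $(\mathrm{Gr}_{M}(N):_{R}M)=R$, and Theorem 3.5 (which relies on the gr-primeful hypothesis) gives $(\mathrm{Gr}_{M}(N):_{R}M)=\mathrm{Gr}((N:_{R}M))$, so $1\in S\cap \mathrm{Gr}((N:_{R}M))$, contradicting the hypothesis. Hence $\mathrm{Gr}_{M}(N)$ is a proper graded submodule of $M$, and since it is graded it cannot absorb every homogeneous element, so a suitable $m_{0}$ exists. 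The remaining steps are routine manipulations in the graded ring of fractions, together with careful bookkeeping of degrees and the fact that all denominators from $S$ are homogeneous.
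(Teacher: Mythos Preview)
Your proof is correct and follows essentially the same route as the paper's: both locate a homogeneous witness $m_{0}\in h(M)\setminus\mathrm{Gr}_{M}(N)$ (the paper asserts this directly from gr-primefulness, while you justify it via Theorem~3.5 and the hypothesis on $S$), use $(r/s)(m_{0}/1)\in S^{-1}N$ to obtain $v\in S$ with $vrm_{0}\in N$, and then apply the graded primary-like condition to conclude $vr\in(N:_{R}M)$. The only notable deviation is at the very end: the paper invokes Theorem~4.2 (that $(N:_{R}M)$ is graded primary) together with $S\cap\mathrm{Gr}((N:_{R}M))=\emptyset$ to upgrade $vr\in(N:_{R}M)$ to $r\in(N:_{R}M)$, whereas you finish more economically by writing $r/s=vr/(vs)\in S^{-1}(N:_{R}M)$, avoiding the appeal to Theorem~4.2 altogether.
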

\begin{proof}
Let $\frac{r}{s}\in (S^{-1}N:_{S^{-1}R}S^{-1}M)$\ for some
$\frac{r}{s}\in
h(S^{-1}R).$ Since $N$ satisfies the gr-prmeful property, there exists $%
m_{g}\in h(M)\backslash $Gr$_{M}(N)$. Thus $\ \frac{r}{s}\frac{m_{g}}{1}=\frac{%
rm_{g}}{s}\in S^{-1}N.$ Hence there exists $u\in S$\ such that $%
urm_{g}\in N.$\ Since $N$ is a graded primary-like submodule, $urm_{g}\in N$%
\ and $m_{g}\notin $Gr$_{M}(N),$ we conclude that $ur\in (N:_{R}M).$
By Theorem 4.2, $(N:_{R}M)$ is a graded primary ideal of $R$. Since $%
S\cap $Gr$((N:_{R}M))=$ $\emptyset \ $and $ur\in (N:_{R}M),$ we have
$r\in (N:_{R}M)$ and hence $\frac{r}{s}\in S^{-1}(N:_{R}M).$ So $%
(S^{-1}N:_{S^{-1}R}S^{-1}M)\subseteq S^{-1}(N:_{R}M).$ It is clear that $%
S^{-1}(N:_{R}M)\subseteq (S^{-1}N:_{S^{-1}R}S^{-1}M).$ Thus $%
S^{-1}(N:_{R}M)=(S^{-1}N:_{S^{-1}R}S^{-1}M).$
\end{proof}

The following results study the behavior of graded primary-like
submodules under localization.

\begin{theorem}
Let $R$ be a $G$-graded ring and $M$ a graded $%
R $-module. Let $S\subseteq h(R)$ be a multiplicatively closed
subset of $R$ and $N$ a graded primary-like submodule of $M$
satisfying the gr-primeful property such that $S\cap
$Gr$((N:_{R}M))=$ $\emptyset $. Then $S^{-1}N$ is a
graded primary-like submodule of $S^{-1}R$-module $S^{-1}M$ satisfying the
gr-primeful property.
\end{theorem}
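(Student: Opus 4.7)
The plan is to split the conclusion into the two separate claims that (a) $S^{-1}N$ is a graded primary-like submodule of the $S^{-1}R$-module $S^{-1}M$, and (b) $S^{-1}N$ satisfies the gr-primeful property. Both parts lean on Lemma 4.12, which equates $S^{-1}(N:_{R}M)$ with $(S^{-1}N:_{S^{-1}R}S^{-1}M)$, together with the routine commutation of Gr$_{M}$ with graded localization.

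For part (a), I take $\frac{r}{s}\in h(S^{-1}R)$ and $\frac{m}{t}\in h(S^{-1}M)$ with $\frac{r}{s}\cdot \frac{m}{t}\in S^{-1}N$. By the description of $S^{-1}N$, there exists $u\in S$ such that $urm\in N$; since $u,r\in h(R)$ and $m\in h(M)$, the product $urm$ is homogeneous, so the graded primary-like property of $N$ applies. If $ur\in (N:_{R}M)$, then $\frac{ur}{1}\in S^{-1}(N:_{R}M)=(S^{-1}N:_{S^{-1}R}S^{-1}M)$ by Lemma 4.12; since $\frac{u}{1}$ is a unit in $S^{-1}R$, cancelling it yields $\frac{r}{s}\in (S^{-1}N:_{S^{-1}R}S^{-1}M)$. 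Otherwise $m\in $Gr$_{M}(N)$, and the standard inclusion $S^{-1}($Gr$_{M}(N))\subseteq $Gr$_{S^{-1}M}(S^{-1}N)$ gives $\frac{m}{t}\in $Gr$_{S^{-1}M}(S^{-1}N)$, as required.

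For part (b), let $q$ be a graded prime ideal of $S^{-1}R$ containing $(S^{-1}N:_{S^{-1}R}S^{-1}M)$. Then $q=S^{-1}p$ for a unique graded prime ideal $p$ of $R$ with $p\cap S=\emptyset$. By Lemma 4.12, $S^{-1}(N:_{R}M)\subseteq S^{-1}p$, which pulls back to $(N:_{R}M)\subseteq p$. Since $N$ satisfies the gr-primeful property, there is a graded prime submodule $P$ of $M$ with $N\subseteq P$ and $(P:_{R}M)=p$. Because $p\cap S=\emptyset$, $S^{-1}P$ is a proper graded submodule of $S^{-1}M$ containing $S^{-1}N$; invoking the standard correspondence between graded prime submodules under localization gives that $S^{-1}P$ is itself graded prime in $S^{-1}M$ with $(S^{-1}P:_{S^{-1}R}S^{-1}M)=S^{-1}(P:_{R}M)=S^{-1}p=q$. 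Hence $S^{-1}N$ satisfies the gr-primeful property.

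The main obstacle is supplying the two standard localization facts in the graded setting on which the above argument rests: first, that $S^{-1}($Gr$_{M}(N))\subseteq $Gr$_{S^{-1}M}(S^{-1}N)$, and second, that for a graded prime submodule $P$ of $M$ with $(P:_{R}M)\cap S=\emptyset$ the localization $S^{-1}P$ is a graded prime submodule of $S^{-1}M$ with colon ideal $S^{-1}(P:_{R}M)$. Both are direct graded analogues of classical results and can be derived by the same kind of colon-cancellation used in Lemma 4.12, but they need to be in place (either proved as small lemmas or cited from the graded-localization literature already referenced) before the two-part argument above goes through cleanly.
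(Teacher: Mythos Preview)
Your argument is correct and follows essentially the same two-part structure as the paper; the only cosmetic difference in part (a) is that you route the case $ur\in(N:_{R}M)$ through Lemma~4.12, whereas the paper invokes Theorem~4.2 directly to conclude $kr\notin(N:_{R}M)$ and thereby force $m\in\mathrm{Gr}_{M}(N)$. One small omission: you should first record that $S^{-1}N\neq S^{-1}M$, which follows from $S\cap\mathrm{Gr}((N:_{R}M))=\emptyset$ (hence $S\cap(N:_{R}M)=\emptyset$) via the remark in Section~2.
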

\begin{proof}
It is easy to see that $\frac{m}{1}\in S^{-1}M$ $\backslash S^{-1}N$
for
each $m\in M\backslash N$ and hence $S^{-1}N$ $\neq S^{-1}M.$ Suppose that $%
\frac{r}{s}\frac{m}{l}\in S^{-1}N$ and $\frac{r}{s}\notin
(S^{-1}N:_{S^{-1}R}S^{-1}M)$\ for some $\frac{r}{s}\in h(S^{-1}R)$
and for some $\frac{m}{l}\in h($ $S^{-1}M).$ Since
$S^{-1}((N:_{R}M))\subseteq (S^{-1}N:_{S^{-1}R}S^{-1}M),$ we have
$r\notin (N:_{R}M).$ Thus there exists $k\in S$ \ such that $\
krm\in N.$ By Theorem 4.2, $(N:_{R}M)$ is a
graded primary ideal of $R$. Since $S\cap $Gr$((N:_{R}M))=$ $\emptyset \ $and $%
r\notin (N:_{R}M),$ we conclude that $kr\notin (N:_{R}M)$. So $%
N $ a is graded primary-like submodule of $M$ gives $m\in $Gr$_{M}(N).$ Hence $%
\frac{m}{l}\in S^{-1}($Gr$_{M}(N))\subseteq $Gr$_{M}(S^{-1}N).$ Therefore $%
S^{-1}N$ is a graded primary-like submodule of $\ S^{-1}M.$

Now we show that $S^{-1}N$ satisfies the gr-primeful property. Let
$S^{-1}p$ be a graded prime ideal of $S^{-1}R$ containing $%
(S^{-1}N:_{S^{-1}R}S^{-1}M). $ Hence $p$ $\cap S=\emptyset $ and $%
S^{-1}((N:_{R}M))\subseteq (S^{-1}N:_{S^{-1}R}S^{-1}M)\subseteq
S^{-1}p$.
Then $S^{-1}((N:_{R}M))\subseteq S^{-1}p$ and so easily follows that $%
(N:_{R}M)\subseteq p.$ Since $N$ satisfies the gr-primeful property,
there exists a graded prime submodule $P$ of $M$ containing $N$ such that $%
(P:_{R}M)=p.$ Since $p$ $\cap S=\emptyset $, $S^{-1}P$ is a graded
prime
submodule of $S^{-1}M$ containing $S^{-1}N$ and $%
(S^{-1}P:_{S^{-1}R}S^{-1}M)=S^{-1}p.$ Therefore $S^{-1}N$ satisfies
the gr-primeful property.
\end{proof}


\bigskip\bigskip\bigskip\bigskip


\begin{thebibliography} {10}


\bibitem{1}K. Al-Zoubi, The graded primary radical of a graded submodules, An.
Stiint. Univ. Al. I. Cuza Iasi. Mat. (N.S.), 1 ( 2016), 395-402.

\bibitem{2}K. Al-Zoubi and R. Abu-Dawwas, On graded quasi-prime submodules,
Kyungpook Math. J., 55 (2) (2015), 259-266.


\bibitem{3}K. Al-Zoubi, R. Abu-Dawwas and I. Al-Ayyoub, Graded semiprime
submodules and graded semi-radical of graded submodules in graded
modules, Ricerche mat., 66(2) (2017), 449-455.


\bibitem{4}K. Al-Zoubi and M. Al-Dolat, On graded classical primary submodules,
Adv. Pure Appl. Math., 7 (2) ( 2016), 93-96.

\bibitem{5}K. Al-Zoubi, M. Jaradat and R. Abu-Dawwas, On graded classical prime
and graded prime submodules, Bull. Iranian Math. Soc., 41 (1)
(2015), 217-225.


\bibitem{6}K. Al-Zoubi and F. Qarqaz, An Intersection condition for graded prime submodules in Gr-multiplication modules, Math. Reports, 20(3), 2018, 329-336.

\bibitem{7}S.E. Atani, On graded prime submodules, Chiang Mai J. Sci., 33 (1)
(2006), 3-7.


\bibitem{8}S.E. Atani, On graded weakly primary ideals, Quasigroups and related
systems 13 (2005), 185-191.


\bibitem{9}S.E. Atani and R.E. Atani, Graded multiplication modules and the
graded ideal $\theta _{g}(M)$, Turk. J. Math., 33 (2009), 1-9.

\bibitem{10}S.E. Atani and F. Farzalipour, Notes on the graded prime submodules,
Int. Math. Forum 1, 38 (2006), 1871-1880.

\bibitem{11}S.E. Atani and F.E.K. Saraei, Graded Modules which Satisfy the
Gr-Radical Formula, Thai J. Math., 8 (1) (2010), 161-170.

\bibitem{12}S.E. Atani and F. Farzalipour, On graded secondary modules, Turk. J.
Math., 31 (2007), 371-378.



\bibitem{13}J. Escoriza and B. Torrecillas, Multiplication Objects in
Commutative Grothendieck Categories, Comm. in Algebra, 26 (6)
(1998), 1867-1883.



\bibitem{14}P. Ghiasvand and F. Farzalipour, On Graded Primary Submodules of Graded
Multiplication Modules, Int. J. Alg., 4 (9) (2010), 429-434.



\bibitem{15}R. Hazrat, Graded Rings and Graded Grothendieck Groups, Cambridge
University Press, Cambridge, 2016.

 \bibitem{16} C.P. Lu,  A module whose prime spectrum has the surjective natural map, Houston J. Math.,
     33 (1) (2007), 125-143.

\bibitem{17}S.C. Lee and R. Varmazyar, Semiprime submodules of Graded
multiplication modules, J. Korean Math. Soc., 49 (2) (2012),
435-447.

 \bibitem{18} H. F. Moghimi  and F. Rashedi, Primary-like submodules satisfying the primeful property, Transactions on Algebra and its Applications, 1 (2015), 43-54.

\bibitem{19}C. Nastasescu and V.F. Oystaeyen, Graded and filtered rings and
modules. Lecture notes in mathematics 758, Berlin-New York:
Springer-Verlag, 1982.

\bibitem{20}C. Nastasescu, F. Van Oystaeyen, Graded Ring Theory, Mathematical
Library 28, North Holand, Amsterdam, 1982.

\bibitem{21}C. Nastasescu and V.F. Oystaeyen, Methods of Graded Rings, LNM
1836. Berlin-Heidelberg: Springer-Verlag, 2004.

\bibitem{22}K.H. Oral, U. Tekir and A.G. Agargun, On Graded prime and primary
submodules, Turk. J. Math., 35 (2011), 159-167.

\bibitem{23}M. Refai and K. Al-Zoubi, On graded primary ideals, Turk. J.
Math., 28 (2004), 217-229.

\bibitem{24} M. Refai, M. Hailat and S. Obiedat, Graded radicals on graded prime
spectra, Far East J. of Math. Sci., part I (2000), 59-73.

\bibitem{25} H.A. Tavallaee and M. Zolfaghari, Graded weakly semiprime submodules
of graded multiplication modules, Lobachevskii J. Math., 34 (1)
(2013), 61-67.


\end{thebibliography}
\end{document}